\documentclass[12pt]{article}
\usepackage{amsmath}
\usepackage{latexsym}
\usepackage{amssymb}
%
%
\newtheorem{thm}{Theorem}[section]
\newtheorem{la}[thm]{Lemma}
\newtheorem{Defn}[thm]{Definition}
\newtheorem{Remark}[thm]{Remark}
\newtheorem{Note}[thm]{Note}
\newtheorem{prop}[thm]{Proposition}

\newtheorem{Example}[thm]{Example}
\newtheorem{Examples}[thm]{Examples}
\newtheorem{Problems}[thm]{Problems}

\newtheorem{Problem}[thm]{Problem}
\newtheorem{Convention}[thm]{Convention}
\newtheorem{Number}[thm]{\!\!}

\newenvironment{example}{\begin{Example}\rm}{\end{Example}}

\newenvironment{rem}{\begin{Remark}\rm}{\end{Remark}}
\newenvironment{numba}{\begin{Number}\rm}{\end{Number}}
\newenvironment{proof}{{\noindent\bf Proof.}}%
                  {\nopagebreak\hspace*{\fill}$\Box$\medskip\medskip\par}   

\newcommand{\wb}{\overline}
\newcommand{\ve}{\varepsilon}
\newcommand{\at}{\symbol{'100}}

\newcommand{\mto}{\mapsto}

\newcommand{\N}{{\mathbb N}}
\newcommand{\R}{{\mathbb R}}

\newcommand{\F}{{\mathbb F}}

\newcommand{\K}{{\mathbb K}}

\newcommand{\Q}{{\mathbb Q}}

\newcommand{\Z}{{\mathbb Z}}

\newcommand{\cg}{{\mathfrak g}}

\newcommand{\dl}{{\displaystyle \lim_{\longrightarrow}}}

\newcommand{\sub}{\subseteq}

\DeclareMathOperator{\SL}{SL}

\DeclareMathOperator{\id}{id}

\DeclareMathOperator{\op}{op}

\DeclareMathOperator{\con}{con}
\DeclareMathOperator{\lev}{lev}
\DeclareMathOperator{\parb}{par}

\DeclareMathOperator{\ik}{ik}

\DeclareMathOperator{\car}{char}
\DeclareMathOperator{\PSL}{PSL}

\begin{document}
\begin{center}
{\Large\bf Contraction groups and the big cell for\\[2mm]
endomorphisms of
Lie groups over local fields}\\[7mm]
{\bf Helge Gl\"{o}ckner\footnote{Supported by Deutsche Forschungsgemeinschaft,
project GL 357/10-1.}}\vspace{4mm}
\end{center}
\begin{abstract}\vspace{.3mm}\noindent
If $G$ is a Lie group over a totally disconnected
local field and
$\alpha\colon G\to G$ an analytic endomorphism,
let $\con(\alpha)$ be the set
of all $x\in G$ such that $\alpha^n(x)\to e$
as $n\to\infty$.
Call sequence $(x_{-n})_{n\in\N_0}$ in~$G$
an \emph{$\alpha$-regressive trajectory} for~$x$
if $\alpha(x_{-n})=x_{-n+1}$ for all $n\in\N$
and $x_0=x$.
Let $\con^-(\alpha)$
be the set of all $x\in G$ admitting an $\alpha$-regressive
trajectory $(x_{-n})_{n\in\N_0}$ such that $x_{-n}\to e$
as $n\to\infty$.
Let $\lev(\alpha)$ be the set of
all $x\in G$ whose $\alpha$-orbit
is relatively compact,
and such that $x$ admits an $\alpha$-regressive
trajectory $(x_{-n})_{n\in\N_0}$
such that $\{x_{-n}\colon n\in\N_0\}$ is relatively compact.
The \emph{big cell}
associated to~$\alpha$ is the subset
$\Omega:=\con(\alpha)\lev(\alpha)\con^-(\alpha)$.
We show: $\Omega$ is open in $G$
and the product map
$\con(\alpha)\times\lev(\alpha)\times\con^-(\alpha)\to \Omega$,
$(x,y,z)\mto xyz$ is \'{e}tale
for suitable immersed Lie subgroup
structures on $\con(\alpha)$, $\lev(\alpha)$,
and $\con^-(\alpha)$.
Moreover, we study group-theoretic properties
of $\con(\alpha)$ and $\con^-(\alpha)$.\vspace{3mm}
\end{abstract}
{\footnotesize {\em Classification}:
22E20 (primary); 
22D05, 
22E25, 
22E35, 
32P05, 
37B05, 
37C05, 
37C86, 
37D10 
(secondary). \\[3mm]
{\em Key words}: endomorphism, automorphism,
Lie group, $p$-adic Lie group, local field, contraction group, anti-contraction group,
Levi subgroup, big cell, parabolic subgroup, nilpotent group, iterated kernel, HNN extension.}
\section{Introduction and statement of results}
Let $G$ be a totally disconnected,
locally compact topological
group, with neutral element~$e$,
and $\alpha\colon G\to G$ be a continuous
endomorphism.
Then various $\alpha$-invariant
subgroups of~$G$ can be associated
with $\alpha$, which are important for the
structure theory of totally disconnected,
locally compact groups and its applications
(see \cite{Wi3}, \cite{GBV}, \cite{BGT};
for automorphisms, see \cite{BaW}, \cite{GaW}, \cite{Sie};
cf.\ \cite{Wi1} and \cite{Wi2}
for the general structure theory).
According to~\cite{Wi3},
the \emph{contraction subgroup} of~$\alpha$
is the set $\con(\alpha)$
of all $x\in G$ such that $\alpha^n(x)\to e$
as $n\to\infty$.
The \emph{anti-contraction subgroup}
of~$\alpha$ is the set of all
$x\in G$ admitting an $\alpha$-regressive trajectory
$(x_{-n})_{n\in\N_0}$ (as recalled in the abstract)
such that $\lim_{n\to\infty}x_{-n}=e$.
Neither of the subgroups
$\con(\alpha)$ and $\con^-(\alpha)$ need to be closed in~$G$;
if $\alpha$ is a (bicontinuous) automorphism of $G$,
then $\con^-(\alpha)=\con(\alpha^{-1})$.
The \emph{parabolic subgroup} of $\alpha$
is the set $\parb(\alpha)$ of all $x\in G$
such that $\{\alpha^n(x)\colon n\in\N_0\}$
is relatively compact in~$G$;
the \emph{anti-parabolic subgroup}
of~$\alpha$ is the set $\parb^-(\alpha)$
of all $x\in G$
admitting an $\alpha$-regressive trajectory
$(x_{-n})_{n\in\N_0}$
such that $\{x_{-n}\colon n\in\N_0\}$
is relatively compact in~$G$.
Then $\parb(\alpha)$ and $\parb^-(\alpha)$
are closed subgroups of~$G$,
whence also the \emph{Levi subgroup}
$\lev(\alpha):=\parb(\alpha)\cap\parb^-(\alpha)$
is closed;
moreover,
\[
\alpha(\con(\alpha))\sub\con(\alpha),\quad
\alpha(\con^-(\alpha))=\con^-(\alpha),\quad
\alpha(\parb(\alpha))\sub\parb(\alpha),
\]
\[
\alpha(\parb^-(\alpha))=\parb^-(\alpha),\quad
\mbox{and}\quad
\alpha(\lev(\alpha))=\lev(\alpha);
\]
see \cite{Wi3} (notably Proposition~19).
We are interested in the subset
\[
\Omega:=\con(\alpha)\lev(\alpha)\con^-(\alpha)
\]
of~$G$, the so-called \emph{big cell}.
If $\con(\alpha)$ is closed
in~$G$, then also $\con^-(\alpha)$
is closed and the product map
\[
\pi\colon \con(\alpha)\times\lev(\alpha)\times\con^-(\alpha)\to\Omega
\]
is a homeomorphism, see \cite[Theorems D and F]{BGT}
(cf.\ already \cite{Wan}
for the case of automorphisms
of $p$-adic Lie groups).
If $\alpha$ is a $\K$-analytic endomorphism
of a Lie group~$G$ over a totally disconnected local
field~$\K$
and $\con(\alpha)$ is closed,
then $\con(\alpha)$, $\lev(\alpha)$,
and $\con^-(\alpha)$
are Lie subgroups of~$G$
and $\pi$ is a $\K$-analytic diffeomorphism,
see \cite[Theorem~8.15]{END}.
If $\K$ has characteristic~$0$,
then $\con(\alpha)$ is always closed
(see \cite[Corollary~6.7]{END}).
Our goal is twofold:
First, we strive for additional
information concerning the subgroups
just discussed, e.g.,
group-theoretic properties
like nilpotency.
Second, we wish to explore
what can be said when $\con(\alpha)$
fails to be closed.
While our focus is on Lie groups,
we start with a general observation:
\begin{prop}\label{fi-res}
Let $G$ be a totally disconnected,
locally compact group and $\alpha\colon G\to G$
be a continuous endomorphism.
Then $\Omega:=\con(\alpha)\lev(\alpha)\con^-(\alpha)$
is an open $e$-neighbourhood in~$G$
such that $\alpha(\Omega)\sub\Omega$
and the map
\[
\parb(\alpha)\times\parb^-(\alpha)\to\Omega,\quad (x,y)\mto xy
\]
is continuous, surjective, and open.
\end{prop}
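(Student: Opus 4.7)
The plan is to identify $\Omega$ with the product $\parb(\alpha)\parb^-(\alpha)$, and then to use a compact open subgroup of~$G$ tidy for~$\alpha$ to produce explicit open neighbourhoods inside~$\Omega$.

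First, I would establish $\Omega=\parb(\alpha)\parb^-(\alpha)$. The inclusion $\Omega\subseteq\parb(\alpha)\parb^-(\alpha)$ is immediate from $\con(\alpha)\lev(\alpha)\subseteq\parb(\alpha)$ and $\con^-(\alpha)\subseteq\parb^-(\alpha)$. For the reverse inclusion I would invoke the factorisations $\parb(\alpha)=\con(\alpha)\lev(\alpha)$ and $\parb^-(\alpha)=\lev(\alpha)\con^-(\alpha)$ provided by Willis's structure theory for endomorphisms (cf.\ \cite{Wi3}); absorbing $\lev(\alpha)\lev(\alpha)=\lev(\alpha)$ then gives $\parb(\alpha)\parb^-(\alpha)=\con(\alpha)\lev(\alpha)\con^-(\alpha)=\Omega$. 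In particular, the product map $\mu\colon\parb(\alpha)\times\parb^-(\alpha)\to\Omega$ is surjective, and its continuity is automatic.

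Next, I would fix a compact open subgroup $V$ of~$G$ tidy for~$\alpha$, providing a decomposition $V=V_+V_-$ with $V_\pm\subseteq\parb^\pm(\alpha)$ (again \cite{Wi3}). For any $z=xy\in\Omega$ with $x\in\parb(\alpha)$, $y\in\parb^-(\alpha)$, the two-sided translate $xVy=xV_+V_-y$ is open in~$G$ and is contained in $\parb(\alpha)\cdot V_+\cdot V_-\cdot\parb^-(\alpha)\subseteq\parb(\alpha)\parb^-(\alpha)=\Omega$; hence $\Omega$ is open in~$G$. For the openness of $\mu$, I would take the basic open neighbourhood $\bigl(x(V\cap\parb(\alpha))\bigr)\times\bigl((V\cap\parb^-(\alpha))y\bigr)$ of $(x,y)$ in $\parb(\alpha)\times\parb^-(\alpha)$; its image under $\mu$ contains $xV_+V_-y=xVy$, which is an open neighbourhood of $z=xy$ in~$G$.

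The $\alpha$-invariance $\alpha(\Omega)\subseteq\Omega$ is immediate from $\alpha(\con(\alpha))\subseteq\con(\alpha)$, $\alpha(\lev(\alpha))=\lev(\alpha)$, and $\alpha(\con^-(\alpha))=\con^-(\alpha)$, already listed in the introduction. The main obstacle is the tidy-subgroup input: one needs, for an arbitrary continuous endomorphism of a totally disconnected locally compact group, both the existence of such a subgroup $V=V_+V_-$ with $V_\pm\subseteq\parb^\pm(\alpha)$, and the factorisation $\parb(\alpha)=\con(\alpha)\lev(\alpha)$ together with its mirror. Both rest on Willis's tidy-subgroup theory adapted to endomorphisms; once they are in place, the remaining steps amount to routine manipulations with subgroups and two-sided translates.
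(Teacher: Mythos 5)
Your overall strategy matches the paper's: identify $\Omega=\parb(\alpha)\parb^-(\alpha)$ via the factorisations $\parb(\alpha)=\con(\alpha)\lev(\alpha)$ and $\parb^-(\alpha)=\lev(\alpha)\con^-(\alpha)$ (which the paper takes from \cite[Lemma~13.1]{BGT}), then exploit a tidy subgroup $V=V_+V_-$ from \cite{Wi3}. Surjectivity, continuity, $\alpha$-invariance, and openness of $\Omega$ are all fine.

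The gap is in the openness of $\mu$. You show that the image of the \emph{one} specific neighbourhood $\bigl(x(V\cap\parb(\alpha))\bigr)\times\bigl((V\cap\parb^-(\alpha))y\bigr)$ contains $xVy$, hence is a $z$-neighbourhood. But openness of $\mu$ requires that the image of \emph{every} neighbourhood of $(x,y)$ be a $z$-neighbourhood, and for smaller neighbourhoods $xA\times By$ with $A\subsetneq V\cap\parb(\alpha)$, $B\subsetneq V\cap\parb^-(\alpha)$, there is no a priori reason why $xABy$ should contain a $z$-neighbourhood — the sets $A$ and $B$ need not themselves come from a tidy subgroup. To close the gap you would need one of two additional inputs. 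Either you need the fact that arbitrarily small tidy subgroups exist (a nontrivial assertion about the endomorphism version of Willis's theory, which you neither prove nor cite), or you need the compact-group step the paper actually uses: restricting the two-sided translation action of $H=\parb(\alpha)\times\parb^-(\alpha)^{\op}$ on $G$ to the compact subgroup $U_-\times(U_+)^{\op}$, whose orbit map onto $U$ is open because a continuous bijection from a compact quotient to a Hausdorff space is a homeomorphism (\ref{fact-2}(c)); this shows $\mu$ is open at $(e,e)$, and the orbit-map lemma \ref{fact-2}(b) (essentially equivariance under left/right translation) then upgrades this to openness everywhere. Your argument, as written, implicitly asserts openness at $(e,e)$ from a single neighbourhood, which is insufficient.
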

Let $\K$ be a totally disconnected local
field.
For the study
of a $\K$-analytic endomorphism
$\alpha$
of a $\K$-analytic Lie group~$G$,
it is useful to consider
$(G,\alpha)$
as a $\K$-analytic dynamical
system with fixed point~$e$.
We shall use
\emph{local invariant manifolds}
for analytic dynamical systems
as introduced in \cite{INV}
and \cite{FIN},
stimulated by
classical constructions in the theory
of smooth dynamical systems over~$\R$
(cf.\ \cite{Irw} and \cite{Wel}).
See \cite{AUT} and \cite{END}
for previous applications of such tools
to invariant subgroups
for analytic automorphisms and endomorphisms.
We recall the concepts of local
stable manifolds, local unstable manifolds
and related notions in Section~\ref{secprel}.
\begin{thm}\label{thmA}
Let $\alpha\colon G\to G$ be a $\K$-analytic
endomorphism of a Lie group~$G$ over
a totally disconnected local field~$\K$.
Then the following holds:
\begin{itemize}
\item[\rm(a)]
There exists a unique $\K$-analytic manifold
structure on $\con(\alpha)$
making it a $\K$-analytic Lie group
$\con_*(\alpha)$, such that $\con_*(\alpha)$
has an open subset which is a local
stable manifold for~$\alpha$ around~$e$.
\item[\rm(b)]
There exists a unique $\K$-analytic manifold
structure on $\con^-(\alpha)$
making it a $\K$-analytic Lie group
$\con^-_*(\alpha)$, such that $\con^-_*(\alpha)$
has an open subset which is a local
unstable manifold for~$\alpha$ around~$e$.
\item[\rm(c)]
There exists a unique $\K$-analytic manifold
structure on $\lev(\alpha)$
making it a $\K$-analytic Lie group
$\lev_*(\alpha)$, such that $\lev_*(\alpha)$
has an open subset which is a
centre manifold for~$\alpha$ around~$e$.
\item[\rm(d)]
There exists a unique $\K$-analytic manifold
structure on $\parb(\alpha)$
making it a $\K$-analytic Lie group
$\parb_*(\alpha)$, such that $\parb_*(\alpha)$
has an open subset which is a
centre-stable manifold for~$\alpha$ around~$e$.
\item[\rm(e)]
There exists a unique $\K$-analytic manifold
structure on $\parb^-(\alpha)$
making it a $\K$-analytic Lie group
$\parb^-_*(\alpha)$, such that $\parb_*^-(\alpha)$
has an open subset which is a local
centre-unstable manifold for~$\alpha$ around~$e$.
\end{itemize}
\end{thm}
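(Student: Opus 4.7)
The strategy for all five parts (a)--(e) is uniform: for each subgroup $H\in\{\con(\alpha),\con^-(\alpha),\lev(\alpha),\parb(\alpha),\parb^-(\alpha)\}$, I would produce the corresponding local invariant manifold at~$e$, upgrade it to a local $\K$-analytic Lie subgroup of~$G$ contained in~$H$, and then transport its analytic structure across all of~$H$ by left translation. Uniqueness is then extracted from uniqueness of the germ of the local invariant manifold at~$e$ together with homogeneity of a Lie group.

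First, I would invoke the local invariant manifold theorems recalled in Section~\ref{secprel} (from \cite{INV} and \cite{FIN}) to obtain, in each case, a $\K$-analytic submanifold $M\subseteq G$ through~$e$ whose tangent space $T_eM$ is the appropriate $d_e\alpha$-invariant subspace of~$\Lie(G)$, namely the sum of generalized eigenspaces with eigenvalue of absolute value $<1$, $>1$, $=1$, $\leq 1$ or $\geq 1$, respectively. Since $d_e\alpha\in\End_\K(\Lie(G))$ is a Lie algebra endomorphism, each such subspace is a Lie subalgebra of~$\Lie(G)$. Combining this with the uniqueness part of the local invariant manifold theorem and the existence of local integral subgroups for Lie subalgebras in ultrametric Lie theory, I would conclude that $M$ is a local Lie subgroup of~$G$ near~$e$: there exists a symmetric open $e$-neighborhood $U\subseteq M$ with $UU\subseteq M$ on which the multiplication and inversion of~$G$ restrict to $\K$-analytic maps $U\times U\to M$ and $U\to U$. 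The inclusion $U\subseteq H$ is forced by the dynamical characterization of~$M$: for (a), convergence $\alpha^n(x)\to e$ within~$M$ implies $\alpha^n(x)\to e$ in~$G$; for (b) and (e), the unstable and centre-unstable manifolds come equipped with $\alpha$-regressive trajectories of points of~$M$; for (c) and (d), relative compactness of forward (and backward) orbits is built into the centre-type manifolds.

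Second, I would transport the analytic structure of~$U$ to all of~$H$. Declare $\{gU:g\in H\}$ to be a basis for a topology on~$H$ with charts $U\to gU$, $x\mapsto gx$. Chart transitions on $gU\cap hU$ are left multiplication by $g^{-1}h\in H$, which is $\K$-analytic on its common domain because the local Lie subgroup property of~$U$ makes left multiplication by any $u\in U$ analytic on a neighborhood of~$e$ in~$U$, and arbitrary $g^{-1}h\in H$ is treated by decomposing it as a product of elements of~$U$ and iterating. The resulting $\K$-analytic manifold structure $H_*$ refines the subspace topology from~$G$; the group operations of $H_*$ are $\K$-analytic by construction on $U\times U$ and by translation invariance of the chart system everywhere, and the inclusion $H_*\hookrightarrow G$ is a $\K$-analytic immersion. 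By construction, $U$ is an open subset of~$H_*$ of the specified invariant-manifold type, yielding existence. For uniqueness, any two such Lie group structures on~$H$ admit open $e$-neighborhoods that are local invariant manifolds of the same type, and hence agree on some open $e$-neighborhood by uniqueness of local invariant manifolds (\cite{INV}); left-translation invariance of each Lie group structure then forces global agreement.

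The main obstacle will be in Step~1, promoting the local invariant manifold $M$ from an $\alpha$-invariant analytic submanifold to a local Lie subgroup of~$G$. The tangent-level statement that $T_eM$ is a Lie subalgebra is easy; closure of~$M$ under the group multiplication of~$G$ on a neighborhood of~$e$ is not a priori obvious from the dynamical defining property. The essential leverage is the uniqueness of the local invariant manifold with its prescribed $d_e\alpha$-invariant tangent data: both $M$ and the local integral subgroup attached to the Lie subalgebra $T_eM$ are $\K$-analytic submanifolds of~$G$ through~$e$ with the same tangent space and the same infinitesimal $\alpha$-dynamics, forcing them to coincide on a neighborhood of~$e$. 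A subsidiary technicality in (b) and (e), where $\alpha$ need not be invertible, is to reconcile the $\alpha$-regressive trajectory conditions defining $\con^-(\alpha)$ and $\parb^-(\alpha)$ with the construction of the unstable and centre-unstable manifolds in \cite{FIN}; this is handled by exhibiting, for each point of~$M^u$ (resp.\ $M^{cu}$), an explicit regressive trajectory with the required convergence or relative compactness property.
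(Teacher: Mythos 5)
Your blueprint (local invariant manifold $\to$ local Lie subgroup $\to$ extend over $H$ by left translations; uniqueness via germ uniqueness) matches the paper's architecture, and your uniqueness argument is essentially correct. However, both substantive steps of the existence proof contain genuine gaps as you have sketched them.

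First, your argument that $M$ is a local Lie subgroup of $G$ does not work: you claim the local integral subgroup of the Lie subalgebra $T_eM\sub L(G)$ must coincide with $M$ near $e$ ``by uniqueness of the local invariant manifold,'' but the uniqueness statements (\ref{uni-germ}, \ref{also-nonhypo}, \ref{nonhypo2}, Lemma~\ref{germ-cu}) apply only to submanifolds satisfying the dynamical defining conditions ($\alpha$-invariance, or the existence of $\alpha$-regressive trajectories), not to all submanifolds through $e$ with the correct tangent space. The local integral subgroup is not a priori $\alpha$-invariant, and over positive characteristic there is no exponential map to convert $L(\alpha)$-invariance of the subalgebra into $\alpha$-invariance of the local subgroup. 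The paper proves the subgroup property of $W^s_a(t)$, $W^c(t)$, $W^u_b(t)$ and of their products by direct dynamical estimates after shrinking $R$ (see \ref{nunum} and \ref{subu}, relying on \cite[Lemmas 8.7--8.10]{END}), and also establishes the normalization facts needed later ($W^c(t)$ normalizes $W^s_a(t)$ and $W^u_b(t)$); none of this follows from tangent-space matching. Second, to extend the analytic structure from $U$ over all of~$H$ you assert that multiplication becomes analytic ``by translation invariance of the chart system,'' with transitions handled ``by decomposing $g^{-1}h$ as a product of elements of~$U$ and iterating.'' This is not enough. Even when $U$ is a genuine subgroup of $H$ (so the cosets $gU$ give a chart system whose transition maps are left translations by elements of~$U$), analyticity of $H_*\times H_*\to H_*$ forces one to control conjugation: writing $(gu)(hv)=(gh)(h^{-1}uh)v$, the product lies in the chart $ghU$ only if $h^{-1}uh$ remains in $U$ for $u$ near $e$ and depends analytically on~$u$ --- and this must hold for \emph{every} $h\in H$, not just $h\in U$. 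This is precisely the hypothesis of Bourbaki's Local Description of Lie Group Structures (\cite[Chapter~III, \S1, no.\,9, Proposition~18]{Bou}), and verifying it is where the bulk of the paper's proof lives. For example, for $g\in\con(\alpha)$ the forward orbit is relatively compact, so one chooses $t$ with $\alpha^n(g)B^\phi_t\alpha^n(g)^{-1}\sub B^\phi_r$ for all $n\in\N_0$, and Lemma~\ref{con-loc} then yields $gW^s_a(t)g^{-1}\sub W^s_a(r)$; parallel arguments with $\alpha$-regressive trajectories are needed for $\con^-(\alpha)$, $\lev(\alpha)$, $\parb(\alpha)$, and $\parb^-(\alpha)$. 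Your proposal omits this verification entirely, so it does not yet establish that the constructed manifold is a Lie group.
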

In the following three theorems,
we retain the situation of Theorem~\ref{thmA}.
\begin{thm}\label{thmB}
The manifolds just constructed have the following
properties.
\begin{itemize}
\item[\rm(a)]
Each of $\con_*(\alpha)$, $\con^-_*(\alpha)$,
$\lev_*(\alpha)$, $\parb_*(\alpha)$, and $\parb^-_*(\alpha)$
is an immersed Lie subgroup of~$G$
and
$\alpha$ restricts to a $\K$-analytic
endomorphism $\alpha_s$, $\alpha_u$, $\alpha_c$, $\alpha_{cs}$, and
$\alpha_{cu}$ thereon,
respectively.
\item[\rm(b)] We have
\begin{equation}\label{again-contr}
\con(\alpha_s)=\con_*(\alpha)\quad\mbox{and}\quad \con^-(\alpha_u)=\con^-_*(\alpha).
\end{equation}
Moreover, $\lev(\alpha_c)$, $\parb(\alpha_{cs})$,
and $\parb^-(\alpha_{cu})$
are open subgroups of $\lev_*(\alpha)$,
$\parb_*(\alpha)$, and $\parb^-_*(\alpha)$,
respectively.
\end{itemize}
\end{thm}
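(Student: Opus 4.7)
The plan is to handle in turn (i)~the immersed Lie subgroup property and analytic restriction of part~(a); (ii)~the contraction identities in part~(b); and~(iii)~openness of $\lev(\alpha_c)$, $\parb(\alpha_{cs})$, and $\parb^-(\alpha_{cu})$.

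For~(i), each of the five Lie groups $H_*\in\{\con_*(\alpha),\con^-_*(\alpha),\lev_*(\alpha),\parb_*(\alpha),\parb^-_*(\alpha)\}$ admits, by Theorem~\ref{thmA}, an open $e$-neighbourhood $W\sub H_*$ which is the corresponding local invariant manifold, hence a $\K$-analytic submanifold of~$G$; in particular, the inclusion $W\emb G$ is an analytic immersion. Using that left translations in $H_*$ and in~$G$ are analytic diffeomorphisms and that the inclusion $H_*\emb G$ is a group homomorphism, this immersion property propagates from~$e$ to every point of~$H_*$. Set-theoretic $\alpha$-invariance of the five subgroups is recorded in the introduction, so $\alpha$ restricts to a group endomorphism on each. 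To see that each restriction is $\K$-analytic, I invoke the standing property of local invariant manifolds that $\alpha$ maps $W$ analytically into the corresponding local invariant manifold (shrinking if necessary); the identity $\alpha(xy)=\alpha(x)\alpha(y)$ combined with left-translation charts then transfers analyticity near~$e$ to analyticity near any point of~$H_*$.

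For~(ii), I first prove $\con(\alpha_s)=\con_*(\alpha)$; the identity for $\con^-$ is symmetric. The inclusion $\con(\alpha_s)\sub\con_*(\alpha)$ is immediate from continuity of $\con_*(\alpha)\emb G$, since convergence $\alpha_s^n(x)\to e$ in $\con_*(\alpha)$ forces $\alpha^n(x)\to e$ in~$G$. For the reverse inclusion, fix $x\in\con(\alpha)$. By the construction of the local stable manifold~$W^s$ as an open subset of $\con_*(\alpha)$ which is a submanifold of~$G$, there exists a compact open subgroup $V\sub G$ with $V\cap\con(\alpha)\sub W^s$; and the subspace topology induced on $W^s$ by $G$ agrees with the $\con_*(\alpha)$-topology on~$W^s$. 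Since $\alpha^n(x)\to e$ in~$G$ and $\alpha^n(x)\in\con(\alpha)$, we have $\alpha^n(x)\in W^s$ for all sufficiently large~$n$; convergence $\alpha^n(x)\to e$ in~$W^s$, hence in $\con_*(\alpha)$, then follows from convergence in~$G$, giving $x\in\con(\alpha_s)$.

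For~(iii), it suffices in each case to exhibit an open $e$-neighbourhood lying in the subgroup in question. For $\lev(\alpha_c)$, I would construct a compact open subgroup $C\sub\lev_*(\alpha)$ with $\alpha_c(C)=C$, starting from a compact open subgroup of $\lev_*(\alpha)$ contained in the centre manifold (such subgroups exist since $\lev_*(\alpha)$ is a totally disconnected Lie group) and adjusting to gain $\alpha_c$-invariance, exploiting the surjectivity $\alpha(\lev(\alpha))=\lev(\alpha)$ from the introduction. Any $x\in C$ then has its $\alpha_c$-orbit contained in~$C$ (hence relatively compact) and admits an $\alpha_c$-regressive trajectory in~$C$ (by surjectivity of $\alpha_c|_C$), so $C\sub\lev(\alpha_c)$ and openness follows. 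The same scheme handles $\parb(\alpha_{cs})$ and $\parb^-(\alpha_{cu})$ using the centre-stable and centre-unstable manifolds respectively. The main technical obstacle I anticipate is producing the $\alpha_c$-invariant compact open subgroup~$C$ (and its analogues for $\parb_*$ and $\parb^-_*$); this is the one step that genuinely goes beyond direct exploitation of Theorem~\ref{thmA} and the invariance properties quoted in the introduction.
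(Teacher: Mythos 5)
There are two genuine gaps, one in each of parts~(ii) and~(iii).

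In part~(ii) you assert that ``there exists a compact open subgroup $V\sub G$ with $V\cap\con(\alpha)\sub W^s$.'' That claim is false in general. Consider the two-sided shift example (Example~1.9 of the paper): $G=\F^\Z$ is compact, $\con(\alpha)$ is the dense subgroup of sequences with support bounded to the left, and $W^s_a$ is a compact open subgroup of $\con_*(\alpha)\cong\F(\!(X)\!)$ consisting of sequences supported in $\N_0$. Any open identity neighbourhood $V\sub G$ contains, say, a sequence whose support is $\{-M-1\}$ for $M$ large; this element lies in $\con(\alpha)$ but not in $W^s_a$. The paper's argument is genuinely different: it uses that if $\alpha^n(g)\in B^\phi_r$ for all $n\geq N$, then $\alpha^N(g)$ lies in the \emph{compact but non-open} subgroup $(B^\phi_r)_-=W^s_a(r)W^c(r)$, and then Lemma~\ref{con-loc} ($\con(\alpha)\cap W^s_a(r)W^c(r)=W^s_a(r)$) forces $\alpha^N(g)\in W^s_a(r)$. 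Both ingredients --- the identification $(B^\phi_r)_-=W^s_a(r)W^c(r)$, and Lemma~\ref{con-loc} --- are substantial steps that your outline treats as automatic.

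In part~(iii) you correctly identify what is needed (compact open subgroups of $\lev_*(\alpha)$, $\parb_*(\alpha)$, $\parb^-_*(\alpha)$ on which $\alpha$ acts appropriately), but you explicitly defer the construction, and the deferred step is precisely the content. In the paper this is obtained for free from the local structure analysis of Section~4: after shrinking $R$, the set $W^c(t)$ is a compact subgroup satisfying $\alpha(W^c(t))=W^c(t)$ (see \ref{nunum}), and $(B^\phi_r)_-=W^s_a(r)W^c(r)$, $(B^\phi_r)_+=W^c(r)W^u_b(r)$ are compact subgroups that are $\alpha$-invariant and filled by $\alpha$-regressive trajectories, respectively. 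For $\parb(\alpha_{cs})$ one only needs forward invariance $\alpha(C)\sub C$ (not $\alpha(C)=C$ as you suggest), since relative compactness of the forward orbit is all that is required; for $\parb^-(\alpha_{cu})$ one needs that every point of $C$ admits an $\alpha$-regressive trajectory in $C$, which is the defining property of $(B^\phi_r)_+$; and for $\lev(\alpha_c)$ one uses that $\alpha|_{W^c(r)}$ is a bijection of $W^c(r)$ onto itself. Without invoking these specifics of the local charts, your argument does not close. Part~(i) is essentially the paper's argument and is fine.
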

It is well known that $\parb(\alpha)$
normalizes $\con(\alpha)$
and $\parb^-(\alpha)$ normalizes
$\con^-(\alpha)$
(see \cite[Lemma~13.1\,(a)]{BGT}; cf.\ \cite[Proposition~3.4]{BaW}
for automorphisms).
\begin{thm}\label{thmC}
\begin{itemize}
\item[\rm(a)]
Let $\Omega\sub G$ be the big cell for~$\alpha$.
The product map
\[
\pi\colon \con_*(\alpha)\times\lev_*(\alpha)\times \con^-_*(\alpha)\to\Omega,
\quad (x,y,z)\mto xyz
\]
is an \'{e}tale $\K$-analytic map and surjective.
\item[\rm(b)]
The action $\lev_*(\alpha)\times \con_*(\alpha)\to\con_*(\alpha)$,
$(x,y)\mto xyx^{-1}$ is $\K$-analytic, whence
the product manifold structure turns
$\con_*(\alpha)\rtimes \lev_*(\alpha)$
into a $\K$-analytic Lie group.
The product map
\[
\con_*(\alpha)\rtimes \lev_*(\alpha)\to\parb_*(\alpha),\quad (x,y)\mto xy
\]
is a surjective group homomorphism and
an \'{e}tale $\K$-analytic map.
\item[\rm(c)]
The action $\lev_*^-(\alpha)\times \con_*^-(\alpha)\to\con_*^-(\alpha)$,
$(x,y)\mto xyx^{-1}$ is $\K$-analytic, whence
the product manifold structure turns
$\con_*^-(\alpha)\rtimes \lev_*(\alpha)$
into a $\K$-analytic Lie group.
The product map
\[
\con_*^-(\alpha)\rtimes \lev_*(\alpha)\to\parb_*^-(\alpha),\quad (x,y)\mto xy
\]
is a surjective group homomorphism and
an \'{e}tale $\K$-analytic map.
\end{itemize}
\end{thm}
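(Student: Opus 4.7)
My plan is to establish parts~(b) and~(c) first and then derive~(a) by composition. All three build on Theorem~\ref{thmA}, which identifies the Lie algebras of $\con_*(\alpha)$, $\lev_*(\alpha)$, $\con^-_*(\alpha)$, $\parb_*(\alpha)$, $\parb^-_*(\alpha)$ with invariant subspaces $\cg_s, \cg_c, \cg_u, \cg_s\oplus\cg_c, \cg_c\oplus\cg_u$ arising from the generalized eigenspace decomposition $\Lie(G)=\cg_s\oplus\cg_c\oplus\cg_u$ of $\Lie(\alpha)$.

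For~(b), the conjugation map $\mu\colon\lev_*(\alpha)\times\con_*(\alpha)\to\con_*(\alpha)$, $(y,x)\mto yxy^{-1}$, is well-defined set-theoretically because $\lev_*(\alpha)\sub\parb(\alpha)$ normalizes $\con(\alpha)$. I would first verify $\K$-analyticity of $\mu$ on an open neighborhood of $(e,e)$ by restricting ambient conjugation to the local centre and local stable manifolds furnished by Theorem~\ref{thmA}; the uniqueness statement there forces this restriction to coincide with the intrinsic Lie group structure on $\con_*(\alpha)$ near~$e$. To propagate analyticity globally, I would use the identity $\mu(y,x_0 x)=\mu(y,x_0)\mu(y,x)$ to reduce joint analyticity at $(y_0,x_0)$ to joint analyticity at $(y_0,e)$, then exploit the $\alpha$-equivariance $\mu\circ(\alpha_c\times\alpha_s)=\alpha_s\circ\mu$ together with the fact that $\alpha_c$ is a $\K$-analytic automorphism of $\lev_*(\alpha)$ to reduce to a neighborhood of $e$ where analyticity is already known. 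Once $\mu$ is $\K$-analytic, $\con_*(\alpha)\rtimes\lev_*(\alpha)$ is a $\K$-analytic Lie group and the product map $m\colon \con_*(\alpha)\rtimes\lev_*(\alpha)\to\parb_*(\alpha)$ is a group homomorphism; its differential at $(e,e)$ is the identity on $\cg_s\oplus\cg_c=\Lie(\parb_*(\alpha))$, so $m$ is \'{e}tale at $(e,e)$ and, being a homomorphism, \'{e}tale everywhere. For surjectivity the image is an open subgroup of $\parb_*(\alpha)$ containing a local centre-stable manifold about $e$, and I would combine $\alpha$-equivariance with the local product decomposition of a centre-stable manifold into centre and stable factors to conclude the image exhausts $\parb_*(\alpha)$.

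Part~(c) is entirely parallel, with unstable replacing stable throughout and $\alpha_u$ being an automorphism of $\con^-_*(\alpha)$. For~(a), given~(b) and~(c), I factor
\[
\con_*(\alpha)\times\lev_*(\alpha)\times\con^-_*(\alpha)\xrightarrow{(x,y,z)\mto(x,yz)}\con_*(\alpha)\times\parb^-_*(\alpha)\xrightarrow{(x,w)\mto xw}\Omega.
\]
The first arrow is \'{e}tale by~(c). For the second, the tangent map at $(e,e)$ is the identification $\cg_s\oplus(\cg_c\oplus\cg_u)=\Lie(G)$, an isomorphism; the \'{e}taleness at $(x_0,w_0)$ is then reduced to the origin by left translation within the source Lie groups combined with left translation by $(x_0 w_0)^{-1}$ on~$G$. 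Surjectivity onto $\Omega=\con(\alpha)\lev(\alpha)\con^-(\alpha)=\con(\alpha)\parb^-(\alpha)$ is immediate from the definitions. The main obstacle is the $\K$-analyticity of $\mu$ across the whole of $\lev_*(\alpha)\times\con_*(\alpha)$: this demands a careful matching of the intrinsic Lie group structure on $\con_*(\alpha)$ (which comes from local stable manifolds via Theorem~\ref{thmA}) with the ambient conjugation on~$G$, and will require identities such as $(\Ad(x^{-1})-\id)(\cg_c)\sub\cg_s$ for $x\in\con(\alpha)$ that witness, at the infinitesimal level, the fact that the set-theoretic normalizations inherited from~$G$ give smooth maps for the intrinsic structures.
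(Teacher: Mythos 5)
Your plan to derive part~(a) from~(b) and~(c) is a sound alternative to the paper's route (the paper treats~(a) directly as the orbit map of the $\K$-analytic left action $((x,y,z),g)\mto xygz$ of $\con_*(\alpha)\times\lev_*(\alpha)\times\con^-_*(\alpha)^{\op}$ on~$G$, then invokes \ref{fact-2}\,(d)). The crux, as you yourself flag, is the global $\K$-analyticity of the conjugation action, and there the proposal has a genuine gap.

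The reduction step you propose for analyticity at $(y_0,e)$ does not work: you suggest using the equivariance $\mu\circ(\alpha_c\times\alpha_s)=\alpha_s\circ\mu$ to ``reduce to a neighborhood of~$e$,'' but $\alpha_c$ is a \emph{distal} automorphism of $\lev_*(\alpha)$ (see \cite[Proposition~7.10\,(c)]{END}), not a contraction, so iterating $\alpha_c$ never moves $y_0$ into a prescribed identity neighbourhood; the reduction simply does not terminate. Likewise, the infinitesimal identity $(\Ad(x^{-1})-\id)(\cg_c)\sub\cg_s$ is offered as the needed mechanism but is neither established nor, even if true, does it directly yield $\K$-analyticity of $g\mto gxg^{-1}$ as a map into the finer-topology group $\con_*(\alpha)$. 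What the paper actually does (Lemma~\ref{inclu}, verifying the hypotheses of the automorphic-action criterion~\ref{act-ana}) is a group-level argument: fix $x\in\con_*(\alpha)$, choose $N$ so that $\alpha^n(x)\in B^\phi_r$ for $n\geq N$, shrink $t$ so that $\alpha^n(gxg^{-1}x^{-1})\in B^\phi_r$ for $n<N$ and $g\in W^s_a(t)W^c(t)$, and use $\alpha^n(x)\to e$ together with Fact~\ref{comp-to-id} to place the \emph{commutator} $gxg^{-1}x^{-1}$ in $W^s_a(r)$ by the dynamical characterization in Lemma~\ref{con-loc}; then $g\mto gxg^{-1}x^{-1}$ is a restriction of ambient conjugation into the submanifold $W^s_a(r)\sub G$, hence $\K$-analytic into $\con_*(\alpha)$, and right-multiplying by the constant~$x$ gives the partial analyticity required by condition~(c) of~\ref{act-ana}. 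This commutator device, together with the explicit descriptions $(B^\phi_r)_-=W^s_a(r)W^c(r)$ and $(B^\phi_r)_+=W^c(r)W^u_b(r)$, is the step your proposal lacks; without it, conditions~(b) and~(c) of~\ref{act-ana} are not established, and the semi-direct product $\con_*(\alpha)\rtimes\lev_*(\alpha)$ is not yet known to be a $\K$-analytic Lie group.

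Once the conjugation action is known $\K$-analytic, the rest of your argument for~(b), (c), and the factorisation for~(a) goes through: the product map is a group homomorphism, its restriction to $W^s_a(r)\times W^c(r)$ (respectively $W^c(r)\times W^u_b(r)$, or $W^u_b(r)\times W^c(r)$ after the elementary rearrangement $(B^\phi_r)_+=W^u_b(r)W^c(r)$ used in the paper) is a diffeomorphism onto an open subgroup by~(\ref{localprod}), so the homomorphism is \'{e}tale at $e$ and hence everywhere. Surjectivity in~(b) and~(c) is $\parb(\alpha)=\con(\alpha)\lev(\alpha)$ and $\parb^-(\alpha)=\con^-(\alpha)\lev(\alpha)$ from \cite[Lemma~13.1]{BGT}, which is cleaner than the manifold-based surjectivity argument you sketch.
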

If $H$ is a group and $\beta\colon H\to H$
an endomorphism, we write
\[
\ik(\beta):=\bigcup_{n\in\N_0}\ker(\beta^n)
\]
for the \emph{iterated kernel}.
If $G$ is a totally disconnected, locally
compact group and $\beta$ a continuous endomorphism,
then $\ik(\beta)\sub \con(\beta)$.
Let us call a continuous
endomorphism of a topological group~$G$
\emph{contractive} if $\alpha^n(g)\to e$
as $n\to\infty$,
for each $g\in G$.
If $G$ is a $\K$-analytic endomorphism
of a $\K$-analytic Lie group~$G$
and $\alpha$ is contractive, then $G=\con(\alpha)=\con_*(\alpha)$
(cf.\ \cite[Proposition~7.10\,(a)]{END}).
\begin{thm}\label{thmD}
\begin{itemize}
\item[\rm(a)]
If $\alpha$ is \'{e}tale,
then $\ik(\alpha)$ is discrete
in $\con_*(\alpha)$.
If $\K$ has characteristic~$0$,
then both properties are equivalent.
\item[\rm(b)]
If $\alpha$ is \'{e}tale, then
$\con_*(\alpha)/\ik(\alpha)$
is an open
$\beta$-invariant subgroup
for some $\K$-analytic
Lie group $H$ and contractive $\K$-analytic
automorphism $\beta\colon H\to H$
of~$H$ which extends the $\K$-analytic
endomorphism $\wb{\alpha}_s$ induced by
$\alpha_s$ on $\con_*(\alpha)/\ik(\alpha_s)$.
In particular,
$\con(\alpha)/\ik(\alpha)$
is nilpotent.
Moreover, $\con_*(\alpha)$ has an open,
$\alpha$-invariant subgroup~$U$ which is nilpotent.
\item[\rm(c)]
If $\car(\K)=0$,
then $\ik(\alpha_s)$ is
a Lie subgroup of $\con_*(\alpha)$.
Thus $Q:=\con_*(\alpha)/\ik(\alpha)$
has a unique $\K$-analytic manifold structure
making the canonical map $\con_*(\alpha)\to Q$
a submersion, and the latter turns~$Q$
into a $\K$-analytic Lie group.
There exists a $\K$-analytic
Lie group $H$ containing $Q$ as an open submanifold
and subgroup, and a contractive $\K$-analytic
automorphism $\beta\colon H\to H$
which extends the $\K$-analytic
endomorphism $\wb{\alpha}_s$ induced by
$\alpha_s$ on $Q$.
Notably, $\con(\alpha)/\ik(\alpha)$ is nilpotent.
\item[\rm(d)]
The $\K$-analytic surjective endomorphism
$\alpha_u$ of $\con^-_*(\alpha)$
is \'{e}tale.
Moreover, $\con^-_*(\alpha)$
has an open subgroup~$S$ with $S\sub\alpha_u(S)$
such that
$\alpha_u|_S\colon S\to\alpha_u(S)$
is a $\K$-analytic diffeomorphism
and $\alpha_u(S)$ can be regarded as an open submanifold
and subgroup of a $\K$-analytic Lie group~$H$
which admits a $\K$-analytic contractive
automorphism $\beta\colon H\to H$
such that $\alpha_u|_S=\beta^{-1}|_S$.
In particular, $S$ and $\alpha_u(S)$ are nilpotent.
\end{itemize}
\end{thm}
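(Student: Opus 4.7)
\emph{Part (a).} The key input is Theorem~\ref{thmB}(b): the restriction $\alpha_s:=\alpha|_{\con_*(\alpha)}$ is a contractive $\K$-analytic endomorphism of $\con_*(\alpha)$, so $\con_*(\alpha)$ admits arbitrarily small open subgroups $W$ with $\alpha_s(W)\sub W$. If $\alpha$ is \'{e}tale then $\ker\alpha$ is discrete in~$G$, so I may shrink $W$ until $W\cap\ker\alpha_s=\{e\}$; the inclusion $\alpha_s(W)\sub W$ then forces $W\cap\ker\alpha_s^n=\{e\}$ for every $n\in\N$ by induction, whence $W\cap\ik(\alpha)=\{e\}$ and $\ik(\alpha)$ is discrete. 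For the converse in characteristic~$0$, a nonzero kernel of $L(\alpha)$ would exponentiate in a Campbell--Hausdorff chart around~$e$ (using $\ker L(\alpha)^{n_0}$ with $n_0\ge\dim G$) to a positive-dimensional local subgroup of $\ik(\alpha)$, contradicting discreteness.

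\emph{Parts (b) and (c).} Under either hypothesis $\ik(\alpha_s)$ is a normal Lie subgroup of $\con_*(\alpha)$: discrete in~(b) by~(a), and in~(c) identified in a Campbell--Hausdorff chart with the image of the eventually constant flag $\ker L(\alpha_s)^n$. Hence $Q:=\con_*(\alpha)/\ik(\alpha_s)$ is a $\K$-analytic Lie group onto which $\alpha_s$ descends as an injective contractive endomorphism $\wb\alpha_s$. \'{E}taleness of $\wb\alpha_s$ (inherited from $\alpha$) makes $\wb\alpha_s(Q)$ an open subgroup of~$Q$; I then form
\[
H\;:=\;\dl\bigl(Q\xrightarrow{\wb\alpha_s}Q\xrightarrow{\wb\alpha_s}Q\to\cdots\bigr),
\]
so that each copy $Q_n$ sits in $H$ with $Q_{n-1}$ identified with the open subgroup $\wb\alpha_s(Q_{n-1})\sub Q_n$. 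The bonding maps being open embeddings, $H$ carries a canonical $\K$-analytic Lie group structure with $Q=Q_0$ open, and the self-map $\wb\alpha_s$ on each $Q_n$ assembles into a $\K$-analytic automorphism $\beta$ of~$H$ extending $\wb\alpha_s$. Contractivity is clear: every $h\in H$ lies in some $Q_n$ on which $\beta$ acts as the contractive $\wb\alpha_s$, so $\beta^k(h)\to e$. The classical Wang--Siebert theorem that $\K$-analytic Lie groups admitting contractive automorphisms are nilpotent now yields nilpotency of~$Q=\con(\alpha)/\ik(\alpha)$, and a small open $\alpha_s$-invariant $W\sub\con_*(\alpha)$ as in~(a) embeds into the nilpotent~$Q$ and gives the required nilpotent open subgroup.

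\emph{Part (d) and main obstacle.} Surjectivity of $\alpha_u$ is stated in the introduction. For \'{e}taleness I would use Theorem~\ref{thmA}(b) together with \cite{INV,FIN}: the open local unstable manifold sitting inside $\con^-_*(\alpha)$ is mapped diffeomorphically by $\alpha$ onto another local unstable manifold around~$e$, and left-translation propagates this local-diffeomorphism property to all of $\con^-_*(\alpha)$. Then $\ker\alpha_u$ is discrete; I pick an open subgroup $S_0\sub\con^-_*(\alpha)$ on which $\alpha_u$ is injective, so $\alpha_u(S_0)$ is open, and shrink to a small open subgroup $S\sub S_0\cap\alpha_u(S_0)$ giving $S\sub\alpha_u(S)$ with $\alpha_u|_S\colon S\to\alpha_u(S)$ a $\K$-analytic diffeomorphism. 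Then $\alpha_u^{-1}|_{\alpha_u(S)}\colon\alpha_u(S)\to S\sub\alpha_u(S)$ is an injective contractive endomorphism (contractivity because $\alpha_u$-regressive trajectories of elements of $\con^-(\alpha)$ tend to~$e$), and the direct-limit construction from~(b) applied to it produces a $\K$-analytic Lie group~$H$ with open subgroup $\alpha_u(S)$ and a contractive $\K$-analytic automorphism $\beta$ of~$H$ satisfying $\beta|_{\alpha_u(S)}=\alpha_u^{-1}|_{\alpha_u(S)}$, equivalently $\beta^{-1}|_S=\alpha_u|_S$; nilpotency of $S$ and $\alpha_u(S)$ then follows from Wang--Siebert. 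I expect the main obstacle to lie in this direct-limit step: in positive characteristic one has no exponential coordinates to shortcut it, so one must verify by hand that the countable ascending union of copies of~$Q$ (resp.\ $\alpha_u(S)$) carries a compatible $\K$-analytic Lie-group structure, that the shift extends to a genuine automorphism of the whole direct limit, and that this automorphism is contractive in the direct-limit topology.
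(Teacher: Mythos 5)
The overall strategy matches the paper: establish discreteness or Lie-subgroup status of $\ik(\alpha_s)$, pass to the quotient $Q$, build a direct limit $H=\dl Q$ along the injective \'{e}tale endomorphism $\wb\alpha_s$, extend $\wb\alpha_s$ to a contractive automorphism $\beta$ of~$H$, and invoke nilpotency of Lie groups admitting contractive automorphisms. Parts (a)--(c) are essentially sound, although two points are vaguer than the paper's treatment: in~(a) the paper deduces local injectivity of $\alpha_s$ directly from $L(\alpha_s)=L(\alpha)|_{\cg_{<a}}$ being an automorphism via the inverse function theorem, which is cleaner than hunting for small $\alpha_s$-invariant subgroups avoiding $\ker\alpha_s$; and in~(c) the paper proves $\ik(\alpha_s)$ is a Lie subgroup by noting the ascending union $\ik(\alpha)=\bigcup_n\ker(\alpha_s^n)$ is \emph{closed} in characteristic zero and running a Baire argument to find $n$ with $\ker(\alpha_s^n)$ open in $\ik(\alpha)$ --- your ``Campbell--Hausdorff chart / eventually constant flag'' sketch gestures at the same conclusion but does not handle the group-theoretic side (why the full iterated kernel, not just one $\ker(\alpha_s^n)$, is a Lie subgroup).

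The genuine gap is in~(d), and it is not where you flag it. The direct-limit step that worries you is exactly the same construction already used (and needed) in~(b) and~(c), and it goes through in any characteristic once one knows the bonding maps are open \'{e}tale embeddings; no exponential coordinates are needed. What does \emph{not} work as stated is the production of the subgroup~$S$. Your move ``shrink to a small open subgroup $S\sub S_0\cap\alpha_u(S_0)$ giving $S\sub\alpha_u(S)$'' is circular: $\alpha_u(S)$ shrinks along with $S$, and $S\sub\alpha_u(S_0)$ does not give $S\sub\alpha_u(S)$. The paper instead uses the scale-covariant ball subgroups $W^u_b(t)$ from the local structure theory (where one may arrange $W^u_b(t)\sub\alpha(W^u_b(t))$, see \cite[8.10]{END}) together with the Ultrametric Inverse Function Theorem to get the quantitative inclusion $\alpha_u(W^u_b(t))\supseteq W^u_b(bt)$, and then takes $S:=(\alpha_u|_{W^u_b(\theta)})^{-1}(W^u_b(b\theta))$ so that $\alpha_u(S)=W^u_b(b\theta)\supseteq S$ by construction. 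The same quantitative estimate is what gives contractivity of $(\alpha_u|_S)^{-1}$: it maps $W^u_b(t)$ into $W^u_b(t/b)$, so iterates converge to~$e$. Your argument that ``$\alpha_u$-regressive trajectories of elements of $\con^-(\alpha)$ tend to~$e$'' is not a usable fact as stated --- an element of $\con^-(\alpha)$ has \emph{some} regressive trajectory tending to~$e$, but the specific trajectory $((\alpha_u|_S)^{-1})^n(x)$ need not a priori be that one, and bounded regressive trajectories in $G$ need not tend to~$e$. One needs either the paper's explicit ball estimate or an appeal to uniqueness of regressive trajectories inside the local unstable manifold.
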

\begin{rem}
(a) If $\alpha\colon G\to G$ is a
$\K$-analytic automorphism,
it was known previously that $\con(\alpha)$
can be given an immersed
Lie subgroup structure modelled
on $\con(L(\alpha))$
such that $\alpha|_{\con(\alpha)}$
becomes a $\K$-analytic contractive automorphism;
likewise for $\con^-(\alpha)=\con(\alpha^{-1})$
(see \cite[Proposition~6.3\,(b)]{FIN}).
Hence $\con(\alpha)$ and $\con^-(\alpha)$
are nilpotent in this case (see \cite{CON}).\\[2mm]
(b) If $\alpha\colon G\to G$ is a $\K$-analytic
endomorphism and $\con(\alpha)$ is closed in~$G$,
then $\alpha|_{\con^-(\alpha)}$
is a $\K$-analytic automorphism of the Lie subgroup
$\con^-(\alpha)$ of~$G$
such that $(\alpha|_{\con^-(\alpha)})^{-1}$
is contractive (cf.\ \cite[Theorem~8.15]{END}), whence $\con^-(\alpha)$
is nilpotent. Moreover, $\alpha|_{\lev(\alpha)}$
is a $\K$-analytic automorphism of the
Lie subgroup $\lev(\alpha)$ of~$G$
and $\alpha|_{\lev(\alpha)}$ is distal
(see \cite[Theorem~8.15 and Proposition~7.10\,(c)]{END}).
\end{rem}
We also observe:
\begin{prop}\label{thmE}
In the setting of Theorem~\emph{\ref{thmA}},
the following holds:
\begin{itemize}
\item[\rm(a)]
$\ik(\alpha)\cap \lev_*(\alpha)$ is discrete.
\item[\rm(b)]
$\ker(\alpha)\cap \parb^-_*(\alpha)$
is discrete.
\item[\rm(c)]
$\ker(\alpha)\cap \con^-_*(\alpha)$
is discrete.
\end{itemize}
\end{prop}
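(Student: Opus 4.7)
My plan is to reduce each part to the \'etaleness of the appropriate restriction of~$\alpha$. For (b) and (c), note that $L(\parb^-_*(\alpha))$ and $L(\con^-_*(\alpha))$ are the centre-unstable and unstable subspaces for~$L(\alpha)$, and on both the restriction of~$L(\alpha)$ is invertible (all eigenvalues there have absolute value~$\ge 1$, in particular are nonzero). Hence $L(\alpha_{cu})$ and $L(\alpha_u)$ are invertible, making $\alpha_{cu}$ and $\alpha_u$ \'etale (this is already recorded for $\alpha_u$ in Theorem~\ref{thmD}(d)). Since $\ker(\alpha)\cap\parb^-_*(\alpha)=\ker(\alpha_{cu})$ and $\ker(\alpha)\cap\con^-_*(\alpha)=\ker(\alpha_u)$, both intersections are discrete, yielding (b) and (c).

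For (a) the same reasoning gives $L(\alpha_c)$ invertible on~$L(\lev_*(\alpha))$ (the centre subspace, where eigenvalues of~$L(\alpha)$ have absolute value~$1$), so $\alpha_c$ is \'etale and $\ker(\alpha_c)$ is already discrete. The claim, however, concerns the a~priori larger subgroup $\ik(\alpha_c)=\ik(\alpha)\cap\lev_*(\alpha)$, so more is needed. My plan is to produce an open neighborhood~$U$ of~$e$ in $\lev_*(\alpha)$ with $\alpha_c(U)\sub U$ on which $\alpha_c|_U$ is injective: granted such a~$U$, a direct induction on~$n$ shows $U\cap\ik(\alpha_c)=\{e\}$ (if $x\in U$ and $\alpha_c^n(x)=e$, then $y:=\alpha_c^{n-1}(x)\in U$, and $\alpha_c(y)=e=\alpha_c(e)$ forces $y=e$ by injectivity, after which the induction hypothesis applies), yielding discreteness.

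The main obstacle is producing such a~$U$. By Theorem~\ref{thmA}(c), an open subset of $\lev_*(\alpha)$ is a centre manifold for~$\alpha$ at~$e$, so in a chart for the Lie group structure on $\lev_*(\alpha)$ around~$e$ the map $\alpha_c$ becomes an analytic map~$\phi$ with $\phi(0)=0$ and $d\phi(0)=L(\alpha_c)$ invertible with all eigenvalues of absolute value~$1$. I would exploit the ultrametric nature of~$\K$: using that $L(\alpha_c)$ preserves a suitable open $\cO$-lattice in $L(\lev_*(\alpha))$ (as it is invertible with unit-modulus eigenvalues, so its $\Z$-action is bounded) together with the higher-order Taylor estimate $\|\phi(x)-L(\alpha_c)x\|\le C\|x\|^2$ near~$0$, a sufficiently small closed ball around~$0$ is simultaneously $\phi$-invariant and a set of injectivity of~$\phi$. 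Pulled back through the chart, this ball yields the required~$U$. The non-archimedean local-dynamics step is the delicate point, especially when $\car(\K)>0$ and $L(\alpha_c)$ fails to be semisimple, but it is precisely the kind of input that underlies the centre manifold constructions of \cite{INV} and \cite{FIN}, from which I would borrow.
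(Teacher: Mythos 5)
Your proposal is correct and in essence follows the same strategy as the paper, so only brief remarks are needed. For (c) you and the paper both invoke Theorem~\ref{thmD}\,(d) directly. For (b) you deduce discreteness from \'etaleness of $\alpha_{cu}$ (invertibility of $L(\alpha_{cu})=L(\alpha)|_{\cg_{\geq 1}}$); the paper instead uses the local product structure $W^c(r)W^u_b(r)$ together with the injectivity of $\alpha$ on each factor to see directly that $\ker(\alpha)\cap W^c(r)W^u_b(r)=\{e\}$. Both work; your route is a little more abstract, the paper's a little more hands-on given the machinery already assembled.

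For (a) you correctly identify the crux: since $\ik(\alpha_c)$ may be strictly larger than $\ker(\alpha_c)$, \'etaleness alone does not suffice, and what one needs is an open $e$-neighbourhood $U$ in $\lev_*(\alpha)$ that is $\alpha_c$-invariant and on which $\alpha_c$ is injective, after which induction on $n$ gives $U\cap\ik(\alpha_c)=\{e\}$. Your sketch of how to produce such a $U$ from the ultrametric inverse function theorem and the fact that $L(\alpha_c)$ is an isometry for an adapted norm is sound, and this is indeed the type of argument underlying the local constructions in \cite{INV}, \cite{FIN}, \cite{END}. However, you are re-deriving a fact that the paper already has available: by \ref{nunum} (which cites \cite[8.9]{END}), after shrinking $R$ one may arrange that $\alpha|_{W^c(t)}\colon W^c(t)\to W^c(t)$ is a $\K$-analytic \emph{diffeomorphism} for every $t\in\,]0,R]$. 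Thus $W^c(r)$ is exactly the desired $\alpha_c$-stable neighbourhood of injectivity, and the paper's proof of (a) consists of the one-line observation that $(\alpha|_{W^c(r)})^n$ is injective for all $n$, hence $\ik(\alpha)\cap W^c(r)=\{e\}$. In short: same idea, but you are reconstructing from first principles a local fact that the surrounding framework already supplies.
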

We mention that refined topologies
on contraction subgroups of well-behaved
automorphisms (e.g., expansive automorphisms)
were also studied in \cite{Si2}
and~\cite{GaR}.\\[2.3mm]
The following examples illustrate
the results.
\begin{example}\label{any-gp}
Let $H$ be any group and
${\bf 1}\colon H\to H$, $x\mto e$
be the constant endomorphism.
If we endow $H$ with the discrete
topology, then $H$ becomes a $\K$-analytic
Lie group modeled on $\K^0$,
and ${\bf 1}$ a $\K$-analytic endomorphism such that
$\con({\bf 1})=H$.
As a consequence, contraction groups
of endomorphisms do not have any special
group-theoretic properties: any group can occur.
\end{example}
\begin{example}\label{Zp}
For a prime number~$p$, consider the group $(\Z_p,+)$
of $p$-adic integers, which is an open subgroup
of the local field $\Q_p$ of $p$-adic numbers
and thus a $1$-dimensional analytic Lie group over~$\Q_p$.
The map
\[
\beta\colon \Z_p\to\Z_p, \quad z\mto pz
\]
is a contractive $\Q_p$-analytic endomorphism.
Since $\beta$ is injective, $\ik(\beta)=\{0\}$
is trivial and hence discrete.
We can extend $\beta$ to the analytic contractive automorphism
$\gamma\colon \Q_p\to\Q_p$, $z\mto pz$.
\end{example}
\begin{example}
If $H$ in Example~\ref{any-gp} is non-trivial,
then
\[
\alpha:={\bf 1}\times \beta
\colon H\times \Z_p\to H\times \Z_p
\]
is a contractive $\Q_p$-analytic endomorphism
which cannot extend to a contractive
automorphism of a Lie group $G\supseteq H\times\Z_p$
as $\alpha$ is not injective.
However, $\{e\}\times \Z_p\cong \Z_p$
is an open $\alpha$-invariant subgroup and embeds in
$(\Q_p,\gamma)$.
Likewise, $G/\ik(\alpha)=G/(H\times\{0\})\cong \Z_p$
embeds in $(\Q_p,\gamma)$.
\end{example}
\begin{example}\label{two-sided}
Let $\F$ be a finite field,
$\K:=\F(\!(X)\!)$ be the local field
of formal Laurent series over~$\F$
and
$\F[\![X]\!]$ be its compact open subring
of formal power series.
Then $G:=\F^\Z$, with the compact product topology,
can be made a $2$-dimensional
$\K$-analytic Lie group in such a way that the bijection
\[
\phi \colon G\to X \F[\![X]\!]\times \F[\![X]\!],\quad
(a_k)_{k\in\Z}\mto
\left(
\sum_{k=1}^\infty a_{-k} X^k,
\sum_{k=0}^\infty a_k X^k \right)
\]
becomes a $\K$-analytic diffeomorphism.
The right-shift $\alpha\colon G\to G$, $(a_k)_{k\in\Z}\mto (a_{k-1})_{k\in\Z}$
is an endomorphism and $\K$-analytic,
as $\phi\circ \alpha\circ\phi^{-1}$
is the restriction to an open set of the
$\K$-linear map
\[
\K\times \K\to\K\times \K,\quad
(z,w)\mto (X^{-1}z,Xw).
\]
Here $\con(\alpha)$ is the dense proper subgroup
of all $(a_k)_{k\in\N}$
of support bounded to the left, i.e.,
there is $k_0\in\Z$ such that $a_k=0$ for all
$k\leq k_0$. Moreover,
\[
\con_*(\alpha)=\F(\!(X)\!).
\]
Likewise, $\con^-(\alpha)$
is the set of all sequences with support bounded
to the right and $\con^-_*(\alpha)\cong\F(\!(X)\!)$
with the automorphism $z\mto X^{-1}z$.
Note that
$\con(\alpha)\cap\con^-(\alpha)$
is the subgroup of all finitely-supported sequences,
which is dense in~$G$ and dense in both
$\con_*(\alpha)$ and $\con_*^-(\alpha)$.
As $G$ is compact, we have $\lev(\alpha)=G$.
Moreover, $\lev^*(\alpha)=G$, endowed with
the discrete topology.
\end{example}
\begin{example}
For $\K$ as in Example~\ref{two-sided},
$G:=\F[\![X]\!]$ is an open subgroup of $(\K,+)$.
The left-shift
\[
\alpha\colon G\to G, \quad \sum_{k=0}^\infty a_kX^k\mto \sum_{k=0}^\infty a_{k+1}X^k
\]
is an endomorphism of~$G$ and $\K$-analytic as it coincides
with the $\K$-linear map $\K\to\K$, $z\mto X^{-1}z$
on the open subgroup $X\F[\![X]\!]$.
We have $\con^-(\alpha)=\F[\![X]\!]$,
since $(X^nz)_{n\in\N_0}$ is an $\alpha$-regressive
trajectory for $z\in G$ with $X^nz\to 0$ as $n\to\infty$.
Moreover,
$\con(\alpha)\sub G$ is the dense subgroup
of finitely supported sequences,
$\ker(\alpha)=\F X^0$, $\ik(\alpha)=\con(\alpha)$,
and $\lev(\alpha)=G$.
Also note that $\con^-(\alpha)=\con^-_*(\alpha)$,
while $\con_*(\alpha)$ and $\lev_*(\alpha)$
are discrete. 
\end{example}
\begin{example}
If $\K$ is a totally disconnected local field
and $\alpha$ a $\K$-linear endomorphism
of a finite-dimensional $\K$-vector space~$E$,
then $E$ admits the Fitting decomposition
\[
E=\ik(\alpha)\oplus F
\]
with $F:=\bigcap_{k\in\N_0}\alpha^k(E)$.
\end{example}
For a non-discrete $\K$-analytic Lie group~$G$
and a $\K$-analytic endomorphism $\alpha\colon G\to G$
it can happen
that both $\bigcap_{k\in\N_0}\alpha^k(G)=\{e\}$
and $\ik(\alpha)=\{e\}$,
even in the case $\K=\Q_p$
(see Example~\ref{Zp}).\footnote{We mention
that the corresponding
Lie algebra endomorphism $L(\alpha)\colon L(G)\to L(G)$, $z\mto pz$
is an isomorphism; thus $\ik(L(\alpha))=\{0\}$ and
$\bigcap_{k\in\N_0}L(\alpha)^k(L(G))=L(G)$.}
In the following example, $\ik(\alpha)=\{e\}$ holds,
$\bigcap_{k\in\N_0}\alpha^k(G)=\{e\}$ and $L(\alpha)=0$,
whence $\ik(L(\alpha))=L(G)$ and $\bigcap_{k\in\N_0}L(\alpha)^k(L(G))=\{0\}$.
\begin{example}
If $\F$ is a finite field of positive characteristic~$p$,
then the Frobenius homomorphism
$\F(\!(X)\!)\to\F(\!(X)\!)$, $z\mto z^p$
is a $\K$-analytic endomorphism of $(\F(\!(X)\!),+)$
and restricts to an injective contractive endomorphism~$\alpha$
of the compact open subgroup $G:=X\F[\![X]\!]$ of
$\F(\!(X)\!)$.
Since $\frac{d}{dz}\big|_{z=0}(z^p)=pz^{p-1}|_{z=0}=0$,
we have $L(\alpha)=0$.
\end{example}
As shown in \cite{CON},
a $\K$-analytic Lie group~$G$ is nilpotent
if it admits
a contractive $\K$-analytic automorphism~$\alpha$.
This becomes false for endomorphisms;
not even open subgroups
need to exist in this case which are nilpotent.
\begin{example}
Let $\F_2$ be the field with $2$ elements.
Then $S:=\SL_2(\F(\!(X)\!)$
is a totally disconnected,
locally compact group which is compactly generated
and simple as an abstract group.
Hence none of the open subgroups
of~$S$ is soluble (see \cite{WiS}),
whence none of them is nilpotent.
Let $G\sub S$ be the congruence subgroup
consisting of all $(a_{ij})_{i,j=1}^2\in S$
such that $a_{ij}-\delta_{ij}\in X\F[\![X]\!]$
for all $i,j\in\{1,2\}$.
Then $G$ is an open subgroup of~$S$
and the map
\[
\alpha\colon G\to G,\;\;
\left(
\begin{array}{cc}
a & b\\
c & d
\end{array}
\right)\mto
\left(
\begin{array}{cc}
a^2 & b^2\\
c^2 & d^2
\end{array}
\right),
\]
which applies the Frobenius to each matrix element,
is a contractive $\K$-analytic endomorphism
and injective.
No open subgroup of~$G$ is nilpotent.
\end{example}
The pathology also occurs if $\car(\K)=0$.
\begin{example}
Let $G:=\PSL_2(\Q_p)$
and $\alpha\colon G\to G$
be the trivial endomorphism $g\mto e$.
Then $G=\con(\alpha)=\con_*(\alpha)$.
As $G$ is a totally disconnected group
which is compactly generated and simple,
no open subgroup of~$G$ is soluble
(nor nilpotent, as a special case).
\end{example}
\section{Preliminaries and notation}\label{secprel}
We write $\N:=\{1,2,\ldots\}$ and $\N_0:=\N\cup\{0\}$.
All topological groups considered in this work
are assumed Hausdorff.
If $X$ is a set, $Y\sub X$ a subset
and $f\colon Y\to X$ a map,
we call a subset $M\sub Y$
\emph{invariant} under $f$ if $f(M)\sub M$;
we say that $M$ is \emph{$f$-stable} if $f(M)=M$.
In the following, $1/0:=\infty$.
Recall that a topological field~$\K$ is called
a \emph{local field} if it is non-discrete
and locally compact~\cite{Wei}.
If~$\K$ is a totally disconnected local field,
we fix an absolute value $|\cdot|$ on~$\K$
which defines its topology
and extend it to an absolute value, also denoted
$|\cdot|$, on an algebraic closure~$\wb{\K}$ of~$\K$
(see \cite[\S14]{Sch}).
If $E$ is a vector space over~$\K$, endowed with an ultrametric norm
$\|\cdot\|$, we write
\[
B_r^E(x):=\{y\in E\colon \|y-x\|<r\}
\]
for the ball of radius $r>0$ around $x\in E$.
If $\alpha\colon E\to F$
is a continuous linear map between
finite-dimensional $\K$-vector spaces~$E$ and $F$,
endowed with ultrametric norms $\|\cdot\|_E$ and $\|\cdot\|_F$,
respectively, we let
\[
\|\alpha\|_{\op}:=\sup\left\{\frac{\|\alpha(x)\|_F}{\|x\|_E}\colon x\in E\setminus \{0\}\right\}\in [0,\infty[
\]
be its operator norm.
For the basic theory of $\K$-analytic mappings
between open subsets of finite-dimensional
$\K$-vector spaces and the corresponding
$\K$-analytic manifolds and Lie groups modeled on $\K^m$,
we refer to \cite{Ser}
(cf.\ also \cite{FAS} and \cite{Bou}).
We shall write $T_pM$ for the tangent
space of a $\K$-analytic manifold~$M$
at $p\in M$.
Given a $\K$-analytic map
$f\colon M\to N$ between $\K$-analytic manifolds,
we write
$T_pf\colon T_pM\to T_{f(p)}M$
for the tangent map of $f$ at $p\in M$.
Submanifolds are as in \cite[Part~II, Chapter~III,
\S11]{Ser}.
If $N$ is a submanifold of a $\K$-analytic manifold~$M$
and $p\in N$, then the inclusion map $\iota\colon N \to M$
is an immersion and we identify $T_pN$ with the vector subspace $T_p\iota(T_p(N))$
of~$T_pM$.
If $F\sub T_pM$ is a vector subspace and
$T_pN=F$, we say that \emph{$N$ is tangent to~$F$} at $p$.
If $M$ is a set, $p\in M$
and $N_1$, $N_2$ are $\K$-analytic
manifolds such that $N_1,N_2\sub M$
as a set and $p\in N_1\cap N_2$,
we write $N_1\sim_p N_2$
if there exists a subset $U\sub N_1\cap N_2$
which is an open $p$-neighbourhood in both $N_1$
and $N_2$, and such that $N_1$ and $N_2$ induce the same
$\K$-analytic manifold structure on~$U$. Then $\sim_p$
is an equivalence relation;
the equivalence class of $N_1$ with respect to
$\sim_p$ is called the \emph{germ of $N_1$ at~$p$}.
If $G$ is a $\K$-analytic Lie group,
we let $L(G):=T_eG$, endowed with its natural
Lie algebra structure;
if $f\colon G\to H$
is a $\K$-analytic homomorphism between
Lie groups, we abbreviate $L(f):=T_e(f)\colon L(G)\to L(H)$.
If $G$ is a $\K$-analytic Lie group
and a subgroup $H\sub G$ is endowed
with a $\K$-analytic Lie group structure
turning the inclusion map
$H\to G$ into an immersion,
we call $H$ an \emph{immersed Lie subgroup}
of~$G$.
This holds if and only if $H$
has an open subgroup
which is a submanifold of~$G$.
If we call a map $f\colon M\to N$
between $\K$-analytic manifolds
a $\K$-analytic diffeomorphism,
then $f^{-1}$ is assumed $\K$-analytic as well.
Likewise, $\K$-analytic isomorphisms between
$\K$-analytic Lie groups (notably, $\K$-analytic
automorphisms) presume a $\K$-analytic inverse function.
If $U$ is an open subset of
a finite-dimensional $\K$-vector space, we identify
the tangent bundle $U$ with $U\times E$,
as usual.
If $f\colon M\to U$
is a $\K$-analytic map, we write
$df\colon TM\to E$ for the second component of
the tangent map $Tf\colon TM\to TU=U\times E$.
If $X$ and $Y$ are topological spaces and $x\in X$,
we say that a map $f\colon X\to Y$ is
\emph{open at $x$} if $f(U)$ is an $f(y)$-neighbourhood in~$Y$
for each $x$-neighbourhood $U$ in~$X$.
If $G$ is a group with group multiplication $(x,y)\mto xy$, we write
$G^{\op}$ for $G$ endowed with the opposite
group structure, with multiplication $(x,y)\mto yx$.\\[1mm]
We shall use several basic facts
(with proofs recalled in Appendix~\ref{appA}).
The terminology in~(d) is as in \cite[Part~II, Chapter~III, \S9]{Ser}.
\begin{numba}\label{fact-2}
Let $\sigma\colon G\times X\to X$, $(g,x)\mto g.x$ be a continuous left action
of a topological group~$G$ on a topological space~$X$,
and $x\in X$. Then we have:
\begin{itemize}
\item[(a)]
If the orbit $G.x$ has an interior point,
then $G.x$ is open in~$X$.
\item[(b)]
If $\sigma^x\colon G\to G.x$, $g\mto g.x$
is open
at~$e$, then $\sigma^x$ is an open map.
\item[(c)]
If $G$ is compact and~$X$ is Hausdorff,
then $\sigma^x\colon G\to G.x$ is an open map.
\item[(d)]
If $G$ is a $\K$-analytic Lie group, $X$ a $\K$-analytic manifold,
$\sigma$ is $\K$-analytic, $G.x\sub X$ is open and $\sigma^x$
is \'{e}tale at $e$, then $\sigma^x$ is \'{e}tale.
\end{itemize}
\end{numba}
We shall also use the following fact concerning automorphic actions.
\begin{numba}\label{act-ana}
Let $G$ and $H$ be $\K$-analytic Lie groups
and $\sigma\colon G\times H\to H$ be a left $G$-action on~$H$
with the following properties:
\begin{itemize}
\item[(a)]
$\sigma_g:=\sigma(g,\cdot)\colon H\to H$ is an automorphism
of the group~$H$, for each $g\in G$.
\item[(b)]
For each $g\in G$, there exists an open $e$-neighbourhood
$Q\sub H$ such that $\sigma_g|_Q$ is $\K$-analytic.
\item[(c)]
For each $x\in H$, there exists
an open $e$-neighbourhood $P\sub G$ such that $\sigma^x:=\sigma(\cdot,x)\colon G\to H$
is $\K$-analytic on~$P$.
\item[(d)]
There exists an open $e$-neighbourhood $U\sub G$
and an open $e$-neighbourhood $V\sub H$ such that $\sigma|_{U\times V}$
is $\K$-analytic.
\end{itemize}
Then $\sigma$ is $\K$-analytic.
\end{numba}
Again, a proof can be found in Appendix~\ref{appA}.
Likewise for the following fact.
\begin{numba}\label{comp-to-id}
Let $G$ be a topological group,
$(g_n)_{n\in\N}$ be a sequence in~$G$
such that $\{g_n\colon n\in \N\}$
is relatively compact
and $(x_n)_{n\in\N}$ be a sequence in~$G$
such that $x_n\to e$ as $n\to\infty$.
Then $g_nx_ng_n^{-1}\to e$.
\end{numba}
\begin{numba}\label{thesubsp}
Henceforth, let $\K$ be a totally disconnected local field
with algebraic closure~$\wb{\K}$ and absolute value~$|\cdot|$.
If~$E$ is a finite-dimensional $\K$-vector space
and $\alpha\colon E\to E$ a $\K$-linear endomorphism,
call $\rho\in [0,\infty]$
a \emph{characteristic value} of~$\alpha$
if $\rho=|\lambda|$ for some
eigenvalue $\lambda\in\wb{\K}$ of the endomorphism
$\alpha_{\wb{\K}}:=\alpha\otimes_\K \id_{\wb{\K}}$ of $E\otimes_\K\wb{\K}$.
If $R(\alpha)\sub[0,\infty[$ is the set of all characteristic
values of~$\alpha$, then
\[
E=\bigoplus_{\rho \in R(\alpha)}E_\rho
\]
for unique $\alpha$-invariant vector subspaces $E_\rho\sub E$
such that $E_\rho\otimes_\K\wb{K}$
equals the sum of all generalized
eigenspaces of $\alpha_{\wb{K}}$ for eigenvalues
$\lambda\in\wb{K}$ with $|\lambda|=\rho$
(compare \cite[Chapter~II, (1.0)]{Mar}).
If $a\in]0,\infty[$ such that $a\not\in R(\alpha)$,
we say that $\alpha$ is \emph{$a$-hyperbolic}.
For $\rho\in [0,\infty[\setminus R(\alpha)$,
let $E_\rho:=\{0\}$.
For $\rho\in [0,\infty[$,
we call $E_\rho$ the \emph{characteristic subspace}
of~$E$ for $\rho$.
Then $E_0=\ik(\alpha)$. Moreover,
$\alpha(E_\rho)=E_\rho$
for each $\rho>0$ and $\alpha|_{E_\rho}\colon E_\rho\to E_\rho$
is an isomorphism.
For each $a\in \,]0,\infty[$, we consider
the following $\alpha$-invariant vector subspaces of~$E$:
\[
E_{<a}:=\bigoplus_{\rho<a}E_\rho,\quad
E_{\leq a}:=\bigoplus_{\rho\leq a}E_\rho,\quad
E_{>a}:=\bigoplus_{\rho>a}E_\rho\quad\mbox{and}\quad
E_{\geq a}:=\bigoplus_{\rho\geq a}E_\rho.
\]
%
%
%
\end{numba}
\begin{numba}\label{def-adap}
By \cite[Proposition~2.4]{FIN},
$E$ admits an ultrametric norm $\|\cdot\|$
which is \emph{adapted to $\alpha$}
in the following sense:
\begin{itemize}
\item[(a)]
$\|x\|=\max\{\|x_\rho\|\colon \rho\in R(\alpha)\}$
if we write $x\in E$ as $x=\sum_{\rho\in R(\alpha)}x_\rho$
with $x_\rho\in E_\rho$;
\item[(b)]
$\|\alpha|_{E_0}\|_{\op}<1$;
\item[(c)]
For all $\rho\in R(\alpha)$ such that $\rho>0$,
we have $\|\alpha(x)\|=\rho\|x\|$ for all $x\in E_\rho$.
\end{itemize}
If $\ve\in\,]0,1]$ is given, then an adapted norm
can be found such that, moreover, $\|\alpha|_{E_0}\|_{\op}<\ve$.
\end{numba}
\begin{rem}\label{forifthm}
By~(a) in~\ref{def-adap},
we have
\[
B^E_r(0)=\prod_{\rho\in R(\alpha)}
B^{E_\rho}_r(0)
\]
for each $r>0$, identifying $E$ with $\prod_{\rho\in R(\alpha)}E_\rho$.
For each $\rho\in R(\alpha)\setminus \{0\}$,
(c) implies that
\[
\alpha(B^{E_\rho}_r)=B_{\rho r}^{E_\rho}(0)\quad\mbox{for all $\, \rho\in R(\alpha)\setminus\{0\}$.}
\]
\end{rem}
\begin{numba}\label{thesetinvma}
Let $M$ be a $\K$-analytic manifold
and $p\in M$. Let $M_0\sub M$ be an open
$p$-neighbourhood and $f\colon M_0\to M$
be a $\K$-analytic map such that $f(p)=p$.
Let $T_p(M)_\rho$ for $\rho>0$
be the characteristic subspaces of $T_pM$
with respect to the endomorphism $T_pf$ of $T_pM$.
\end{numba}
\begin{numba}
Let $N\sub M_0$ be a submanifold such that $p\in N$.
\begin{itemize}
\item[(a)]
If $a\in\,]0,1]$
and $T_pf$ is $a$-hyperbolic,
we say that $N$ is a \emph{local $a$-stable manifold}
for $f$ around~$p$ if $T_pN=(T_pM)_{<a}$ and
$f(N)\sub N$.
If, moreover, $a>\rho$ for each
$\rho\in R(T_pf)$ such that $\rho<1$,
we call $N$ a \emph{local stable manifold}
for $f$ around~$p$.
\item[(b)]
We say that $N$ is a \emph{centre manifold}
for $f$ around~$p$ if $T_pN=(T_pM)_1$
and $f(N)=N$.
\item[(c)]
If $b\geq 1$ and $T_pf$ is $b$-hyperbolic,
we say that $N$ is a \emph{local $b$-unstable manifold} for~$f$
around~$p$ if $T_pN=(T_pM)_{>b}$
and there exists an open $p$-neighbourhood $P\sub N$
such that $f(P)\sub N$.
If, moreover, $b<\rho$ for each
$\rho\in R(T_pf)$ such that $\rho>1$,
we call $N$ a \emph{local unstable manifold}
for $f$ around~$p$.
\item[(d)]
We call $N$
a \emph{centre-stable manifold}
for $f$ around~$p$ if $f(N)\sub N$
and $T_pN=(T_pM)_{\leq 1}$.
\item[(e)]
We call $N$ a \emph{local centre-unstable manifold}
for $f$ around~$p$ if $T_pN=(T_pM)_{\geq 1}$
and there
exists an open $p$-neighbourhood
$P\sub N$ such that $f(P)\sub N$.
\end{itemize}
\end{numba}
\begin{numba}\label{uni-germ}
We mention that a centre manifold
for $f$ around~$p$ always exists in the situation
of~\ref{thesetinvma},
by \cite[Theorem~1.10]{INV},
whose germ at~$p$ is uniquely determined
(noting that the alternative
argument in the proof
does not require that $T_pf$ be an automorphism).
For each $a\in\,]0,1]$ such that $T_pf$ is
$a$-hyperbolic,
a local $a$-stable manifold
around $p$ exists,
whose germ around~$p$ is uniquely
determined (by the Local
Invariant Manifold Theorem in~\cite{FIN}).
For each $b\in [1,\infty[$
such that $T_pf$ is
$b$-hyperbolic,
a local $b$-unstable manifold
around $p$ exists,
whose germ around~$p$ is uniquely
determined (by the Local Invariant Manifold
Theorem just cited).
Moreover,
a centre-stable manifold
for $f$ around~$p$
always exists, whose germ
at~$p$ is uniquely determined
(again by the Local Invariant Manifold
Theorem).
\end{numba}
\begin{numba}\label{also-nonhypo}
The germ at $p$ of a local stable
manifold for $f$ around~$p$
is uniquely determined.
\,In fact,
\[
(T_pM)_{<a}=(T_pM)_{<b}
\]
for all $a,b\in\,]0,1]\setminus R(T_p(f))$ with $a>\rho$ and $b>\rho$
for all $\rho\in R(T_pf)\cap \,[0,1[$.
As a consequence, a submanifold
$N\sub M$ is a local $a$-stable manifold
if and only if it is a local $b$-stable
manifold for~$f$ around~$p$. Thus \ref{uni-germ}
applies.
\end{numba}
Likewise, $(T_pM)_{>a}=(T_pM)_{>b}$
for all $a,b\in [1,\infty[\,\setminus R(T_pf)$ such that $a<\rho$ and $b<\rho$
for all $\rho\in R(T_pf)\cap \,]1,\infty[$.
Hence, a submanifold $N\sub M$
is a local $a$-unstable manifold
for $f$ around~$p$
if and only if it is a local $b$-unstable
manifold. As a consequence:
\begin{numba}\label{nonhypo2}
The germ at~$p$ of a local unstable
manifold for $f$ around~$p$
is uniquely determined.
\end{numba}
Let $M$ be a $\K$-analytic manifold,
$f\colon M\to M$ be $\K$-analytic
and $p\in M$ such that $f(p)=p$.
Let $a\in\,]0,1]\setminus R(T_pf)$,
$\,b\in [1,\infty[\setminus R(T_pf)$
and $\|\cdot\|$
be an ultrametric norm on $E:=T_pM$
adapted to $T_pf$
such that $\|T_pf|_{E_0}\|_{\op}<a$.
Endow vector subspaces $F\sub E$ with the norm
induced by $\|\cdot\|$
and abbreviate $B^F_t:=B^F_t(0)$ for $t>0$.
We shall use the following fact, which is \cite[Proposition~7.3]{END}:
\begin{numba}\label{prop-7-3}
There exists $R>0$ with the following properties:
\begin{itemize}
\item[(a)]
There exists a local $a$-stable manifold $W^s_a$
for~$f$ around~$p$ and a $\K$-analytic diffeomorphism
\[
\phi_s\colon W_a^s\to B_R^{E_{<a}}
\]
such that $\phi_s(p)=0$, $W^s_a(t):=\phi_s^{-1}(B_t^{E_{<a}})$
is a local $a$-stable manifold for~$f$ around $p$
for all $t\in \,]0,R]$,
and $d\phi_s|_{T_p(W^s_a)}=\id_{E_{<a}}$.
\item[(b)]
There exists a centre manifold $W^c$
for~$f$ around~$p$ and a $\K$-analytic diffeomorphism
\[
\phi_c\colon W^c\to B_R^{E_1}
\]
such that $\phi_c(p)=0$, $W^c(t):=\phi_c^{-1}(B_t^{E_1})$
is a centre manifold for~$f$ around $p$
for all $t\in \,]0,R]$,
and $d\phi_c|_{T_p(W^c)}=\id_{E_1}$.
\item[(c)]
There exists a local $b$-unstable manifold $W^u_b$
for~$f$ around~$p$ and a $\K$-analytic diffeomorphism
\[
\phi_u\colon W_b^u\to B_R^{E_{>b}}
\]
such that $\phi_u(p)=0$, $W^u_b(t):=\phi_u^{-1}(B_t^{E_{>b}})$
is a local $b$-unstable manifold for~$f$ around $p$
for all $t\in \,]0,R]$,
and $d\phi_u|_{T_p(W^u_b)}=\id_{E_{>b}}$.
\end{itemize}
\end{numba}
See Appendix~\ref{appA} for a proof of the following
auxiliary result.
\begin{la}\label{enough-uni}
Let $M$ be a $\K$-analytic manifold,
$p\in M$ and $f\colon M_0\to M$
be a $\K$-analytic mapping
such that $f(p)=p$.
If $N\sub M$ is a
local centre-unstable manifold for~$f$
around~$p$, then for every $p$-neighbourhood
$W\sub N$, there exists an open $p$-neighbourhood
$O\sub N$ such that $f(O)$ is open in~$N$
and $O\sub f(O)\sub W$.
In particular, $f(O)$ is a local centre-unstable
manifold for $f$ around~$p$.
\end{la}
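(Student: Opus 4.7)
The plan is to transfer the problem to an adapted chart on $E_+:=(T_pM)_{\geq 1}$, where $T_pf$ restricts to an automorphism with non-expanding inverse, and then use the ultrametric inverse function theorem to show that small balls are mapped to balls containing them.

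First I would fix an ultrametric norm $\|\cdot\|$ on $E:=T_pM$ adapted to $T_pf$ in the sense of~\ref{def-adap}, equip $E_+$ with the restricted norm, and set $\alpha_+:=T_pf|_{E_+}$. By~\ref{thesubsp} and~\ref{def-adap}(c), $\alpha_+$ is an automorphism of $E_+$ with $\|\alpha_+(x)\|\geq\|x\|$ for all $x\in E_+$; in particular $\|\alpha_+^{-1}\|_{\op}\leq 1$, and $\alpha_+(B_r^{E_+})\supseteq B_r^{E_+}$ for every $r>0$ by Remark~\ref{forifthm}. Using a submanifold chart of $M$ at $p$, I choose a chart $\phi$ of $N$ defined on an open $p$-neighbourhood $V_0\subseteq P$ with $\phi(p)=0$, $\phi(V_0)\subseteq E_+$ open, and $d\phi|_{T_pN}=\id_{E_+}$; after shrinking again, I obtain an open $V\subseteq V_0$ with $f(V)\subseteq V_0$, so that $F:=\phi\circ f|_V\circ\phi^{-1}$ is defined on $U:=\phi(V)$ with $F(0)=0$, $dF(0)=\alpha_+$, and the standard Taylor estimate $F(x)=\alpha_+x+g(x)$ with $\|g(x)\|\leq C\|x\|^2$ on some $B_{r_0}^{E_+}\subseteq U$.

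The key step is to consider the auxiliary map $H(x):=\alpha_+^{-1}F(x)=x+\alpha_+^{-1}g(x)$. Since $\|\alpha_+^{-1}\|_{\op}\leq 1$, one has $\|H(x)-x\|\leq C\|x\|^2<\|x\|$ whenever $0<\|x\|<1/C$, and the ultrametric inequality then forces $\|H(x)\|=\|x\|$, so $H(B_r^{E_+})\subseteq B_r^{E_+}$ for $r\leq \min(r_0,1/(2C))$. Since $dH(0)=\id_{E_+}$, the inverse function theorem yields a $\K$-analytic local inverse $H^{-1}$ of $H$ at $0$ with $dH^{-1}(0)=\id_{E_+}$, and the identical estimate applied to $H^{-1}$ gives $H^{-1}(B_r^{E_+})\subseteq B_r^{E_+}$ after a further shrinking of~$r$. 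Combining the two inclusions shows $H(B_r^{E_+})=B_r^{E_+}$, whence
\[
F(B_r^{E_+}) \;=\; \alpha_+(H(B_r^{E_+})) \;=\; \alpha_+(B_r^{E_+}) \;\supseteq\; B_r^{E_+}.
\]

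To finish, given $W$ I further shrink $r$ so that $\alpha_+(B_r^{E_+})\subseteq\phi(W)$ (possible since $\alpha_+$ is continuous and $\phi(W)$ is a $0$-neighbourhood in $\phi(V_0)$) and set $O:=\phi^{-1}(B_r^{E_+})$. Then $O$ is an open $p$-neighbourhood in $N$, $f(O)=\phi^{-1}(\alpha_+(B_r^{E_+}))$ is open in~$N$, and $O\subseteq f(O)\subseteq W$. Finally, $f(O)\subseteq N$ is a submanifold of $M$ with $T_pf(O)=T_pN=(T_pM)_{\geq 1}$; taking $O$ itself as the required $p$-neighbourhood inside $f(O)$ (it is open there since $O\subseteq f(O)$ is open in~$N$) and observing $f(O)\subseteq f(O)$ trivially, the set $f(O)$ is a local centre-unstable manifold for $f$ around~$p$. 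The main obstacle is the ultrametric identity $H(B_r^{E_+})=B_r^{E_+}$, which is precisely where the non-expanding nature of $\alpha_+^{-1}$ on $E_+$ — the very definition of the centre-unstable splitting — is genuinely used; the remaining steps (submanifold chart, Taylor estimate, inverse function theorem) are routine setup.
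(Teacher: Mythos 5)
Your proposal is correct and takes essentially the same approach as the paper: pick an adapted norm, pass to a chart on $N$ with $d\phi|_{T_pN}=\id$, and exploit that $T_pf|_{(T_pM)_{\geq 1}}$ is a norm-expanding automorphism to show $F(B_r)\supseteq B_r$ in the chart. The only difference is that the paper cites the Ultrametric Inverse Function Theorem (\cite[Lemma~6.1\,(b)]{IMP}) to get the identity $g(B_r^F(0))=g'(0)(B_r^F(0))$ directly, whereas you re-derive exactly this fact from scratch via the auxiliary map $H=\alpha_+^{-1}F$, the ultrametric norm-preservation $\|H(x)\|=\|x\|$, and a symmetric argument for $H^{-1}$; this is a fine, self-contained way of proving the special case of that cited theorem that is needed here.
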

\section{Proof of Proposition~\ref{fi-res}}\label{fipro}
Let $U\sub G$ be a compact open subgroup
which minimizes the index
\[
[\alpha(U):\alpha(U)\cap U].
\]
Then $U$ is \emph{tidy} for $\alpha$ in the sense of \cite[Definition~2]{Wi3}
(by the main theorem of~\cite{Wi3}, on p.\,405).
Thus
\[
U=U_+U_-=U_-U_+,
\]
where $U_+$ is the subgroup of all $x\in U$ having an $\alpha$-regressive
trajectory in~$U$ and $U_-:=\{x\in U\colon (\forall n\in\N)\; \alpha^n(x)\in U\}$.
Moreover, $U_+$ and $U_-$
are compact subgroups of~$U$
(see \cite[Definition 4 and Proposition~1]{Wi3}).
Now
\[
\parb(\alpha)\cap U=U_-\quad\mbox{and}\quad
\parb^-(\alpha)\cap U=U_+,
\]
by \cite[Proposition~11]{Wi3},
whence $U_+$ and $U_-$ are compact open subgroups
of $\parb^-(\alpha)$ and $\parb(\alpha)$, respectively.
By \cite[Lemma~13.1]{BGT}, we have
\[
\parb(\alpha)=\con(\alpha)\lev(\alpha)\quad\mbox{and}\quad
\parb^-(\alpha)=\con^-(\alpha)\lev(\alpha)=\lev(\alpha)\con^-(\alpha),
\]
entailing that
\[
\Omega:=\con(\alpha)\lev(\alpha)\con^-(\alpha)=\parb(\alpha)\parb^-(\alpha).
\]
The product map
\[
p\colon \parb(\alpha)\times \parb^-(\alpha)\to G,\quad (x,y)\mto xy
\]
is continuous, with image~$\Omega$.
We get a continuous left action $\sigma\colon H\times G\to G$
of the direct product
\[
H:=\parb(\alpha)\times(\parb^-(\alpha)^{\op})
\]
on~$G$ via $(x,y).z:=xzy$.
Then $\Omega=H.e$ equals the $e$-orbit.
As $\Omega\supseteq U_-U_+=U$ is
a neighbourhood of~$e$ in~$G$,
\ref{fact-2}\,(a)
shows that $\Omega$ is open in~$G$.
Note that the orbit map
\[
\sigma^e\colon H\to G,\quad (x,y)\mto xey=xy
\]
equals~$p$.
We now restrict $\sigma$
to a continuous left action $\tau\colon K\times G\to G$ of the compact
group
\[
K:= U_-\times (U_+)^{\op}
\]
on $G$. Then $p(K)=U=K.e$ and
\[
p|_K\colon K\to K.e
\]
is the orbit map, which is an open map by \ref{fact-2}\,(c).
Since $p(K)=U$ is open in~$G$,
we deduce that also the map $p\colon H\to H.e=\Omega$
is open at~$e$ end hence an open map,
by \ref{fact-2}\,(b). $\,\square$
\section{Local structure of {\boldmath$(G,\alpha)$} around {\boldmath$e$}}\label{locstru}
We recall the construction of well-behaved $e$-neighbourhoods
from~\cite[\S8]{END}. These facts are essential for
all of our main results.
Let $G$ be a Lie group over a totally disconnected local field~$\K$
and $\alpha\colon G\to G$ be a $\K$-analytic endomorphism.
Pick an ultrametric norm $\|\cdot\|$ on
$\cg:=L(G)$ which is adapted to
$L(\alpha)$.
\begin{numba}
Pick $a\in\,]0,1]$
such that $L(\alpha)$ is $a$-hyperbolic
and $a>\rho$ for each characteristic value~$\rho$
of $L(\alpha)$ such that $\rho<1$.
Pick $b\in [1,\infty[$ such that
$L(\alpha)$ is $b$-hyperbolic and
$b<\rho$ for each charcteristic value $\rho$
of $L(\alpha)$ such that $\rho>1$.
With respect to the endomorphism $L(\alpha)$ of~$\cg$,
we then have
\[
\cg_{<1}=\cg_{<a}\quad\mbox{and}\quad \cg_{>1}=\cg_{>b},
\]
entailing that
\[
\cg=\cg_{<a}\oplus \cg_1\oplus \cg_{>b}.
\]
We find it useful to identify $\cg$ with the direct product
$\cg_{<a}\times\cg_1\times\cg_{>b}$;
an element $(x,y,z)$ of the latter is identified with
$x+y+z\in\cg$.
Let $R>0$, $W^s_a$, $W^c$, $W^u_b$
and the $\K$-analytic diffeomorphisms
\[
\phi_s\colon W^s_a\to B^{\cg_{<a}}_R(0),\quad
\phi_c\colon W^c\to B^{\cg_1}_R(0),\quad\mbox{and}\quad
\phi_u\colon W^u_b\to B^{\cg_{>b}}_R(0)
\]
be as in \ref{prop-7-3}, applied with $G$ in place of~$M$,
$\alpha$ in place of~$f$ and $e$ in place of~$p$.
Abbreviate $B^F_t:=B^F_t(0)$ if $t>0$ and $F\sub\cg$
is a vector subspace.
Using the inverse maps $\psi_s:=\phi_s^{-1}$, $\psi_c:=\phi_c^{-1}$,
and $\psi_u:=\phi_u^{-1}$, we define the $\K$-analytic map
\[
\psi\colon B_R^\cg=B_R^{\cg_{<a}}\times B^{\cg_1}_R\times
B^{\cg_{>b}}_R\to G,\quad (x,y,z)\mto \psi_s(x)\psi_c(y)\psi_u(z).
\]
Then $T_0\psi=\id_\cg$ if we identify $T_0\cg=\{0\}\times\cg$
with $\cg$ by forgetting the first component.
By the inverse function theorem,
after shrinking~$R$ if necessary, we may assume that
the image $W^s_aW^cW^u_b$ of~$\psi$
is an open identity neighbourhood in~$G$,
and that
\[
\psi\colon B^\cg_R\to W^s_aW^cW^u_b
\]
is a $\K$-analytic diffeomorphism.
In particular,
the product map
\begin{equation}\label{localprod}
W^s_a\times W^c\times W^u_b\to W^s_aW^cW^u_b,\quad
(x,y,z)\mto xyz
\end{equation}
is a $\K$-analytic diffeomorphism.
We define $\phi:=\psi^{-1}$,
with domain $U:=W^s_aW^cW^u_b$ and image $V:=B^\cg_R$.
After shrinking $R$ further if necessary,
we may assume that
\[
B^\phi_t:=\phi^{-1}(B^\cg_t)
\]
is a compact open subgroup of~$G$
for each $t\in\,]0,R]$
and a normal subgroup of~$B^\phi_R$
(see \cite[5.1 and Lemma~5.2]{END}).
Then
\[
B^\phi_t=\phi^{-1}(B^\cg_t)=\psi(B^\cg_t)
=\psi_s(B^{\cg_{<a}}_t)\psi_c(B^{\cg_1}_t)\psi_u(B^{\cg_{>b}}_t)
=W^s_a(t)W^c(t)W^u_b(t)
\]
with notation as in \ref{prop-7-3}.
\end{numba}
\begin{numba}\label{nunum}
After shrinking~$R$ if necessary,
we may assume that $W^u_b(t)$
is a subgroup of~$G$ for all $t\in\, ]0,R]$
and the set $W^c(t):=\phi_c^{-1}(B^{\cg_1}_t)$
normalizes $W^u_b(t)$
(see \cite[Lemma~8.7]{END}).
Since $W^s_a(t)$ is a local $a$-stable manifold for~$\alpha$,
we have
\[
\alpha(W^s_a(t))\sub W^s_a(t)\quad\mbox{for all $t\in\,]0,R]$.}
\]
After shrinking~$R$, moreover
\begin{equation}\label{subcontra}
\bigcap_{n\in\N_0}\alpha^n(W^s_a)=\{e\}\quad\mbox{and}\quad
\lim_{n\to\infty}\alpha^n(x)=e\;\,\mbox{for all $x\in W^s_a$}
\end{equation}
(see \cite[8.8]{END}). In addition, one may assume that
\[
\alpha|_{W^c(t)}\colon W^c(t)\to W^c(t)
\]
is a $\K$-analytic diffeomorphism for each $t\in\,]0,R]$
(see \cite[8.9]{END}).
\end{numba}
\begin{numba}\label{subu}
As shown in \cite[8.10]{END},
after shrinking~$R$ one may assume
that, for each $t\in\,]0,R]$
and $x\in W^u_b(t)$,
there exists an $\alpha$-regressive
trajectory $(x_{-n})_{n\in\N_0}$
in~$W^u_b(t)$ such that $x_0=x$ and
\[
\lim_{n\to\infty} x_{-n}=e;
\]
moreover, one may assume that $W^u_b(t)\sub\alpha(W^u_b(t))$ for all $t\in\,]0,R]$.
Since $L(\alpha|_{W^u_b})=L(\alpha)|_{\cg_{>b}}$
is injective,
we may assume that $\alpha|_{W^u_b}$
is an injective immersion,
after possibly shrinking~$R$.
\end{numba}
\begin{numba}
After shrinking~$R$ if necessary,
we may assume that $W^s_a(t)$ is a subgroup
of~$G$ for each $t\in\,]0,R]$
and that $W^c(t)$ normalizes $W^s_a(t)$.
Using a dynamical description
of the local $a$-stable manifolds
as in \cite[Theorem~6.6(c)(i)]{INV}, this can be proved like
\cite[Lemma~8.7]{END}.
\end{numba}
\begin{numba}
By \cite[8.1]{END},
there exists $r\in\,]0,R]$
such that $\alpha(W^u_b(r))\sub W^u_b$
and, for each $x\in W^u_b(r)\setminus\{e\}$,
there exists $n\in \N$
such that $\alpha^n(x)\not\in W^u_b(r)$.
\end{numba}
\begin{numba}
For each $t\in \,]0,R]$,
\[
(B_t^\phi)_-:=\{x\in B^\phi_t\colon (\forall n\in\N)\colon
\alpha^n(x)\in B^\phi_t\}
\]
is a compact subgroup of $B^\phi_t$.
Let $(B^\phi_t)_+$ be the set of all
$x\in B^\phi_t$ for which there exists
an $\alpha$-regressive trajectory
$(x_{-n})_{n\in\N_0}$
such that $x_{-n}\in B^\phi_t$ for all $n\in\N_0$
and $x_0=x$.
As recalled in Section~\ref{fipro},
also $B^\phi_+$ is a compact subgroup of~$B^\phi_t$.
For each $t\in \,]0,R]$,
we have
\[
(B^\phi_t)_+=W^c(t)W^u_b(t);
\]
moreover,
\[
(B^\phi_t)_-=W^s_a(t)W^c(t)
\]
for all $t\in\,]0,r]$,
by Equations~(68) and (73), respectively,
in \cite[proof of Theorem~8.13]{END}.
Moreover,
\[
W^c(t)=(B^\phi_t)_-\cap (B^\phi_t)_+
\]
is a compact \emph{subgroup} of $B^\phi_t$
for each $t\in \,]0,r]$,
see \cite[Remark~8.14]{END}.
\end{numba}
We shall use the following result concerning local unstable manifolds.
\begin{la}\label{germ-cu}
There exists a
local centre-unstable manifold $N$
for $\alpha$ around~$e$.
Its germ at~$e$ is uniquely
determined.
\end{la}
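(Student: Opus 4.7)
The plan is twofold: build a concrete local centre-unstable manifold out of the product decomposition $W^s_a W^c W^u_b$ developed in this section, and then deduce germ-uniqueness by a graph argument (equivalently, the uniqueness clause of the Local Invariant Manifold Theorem of \cite{FIN}).

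For existence, I set $N := W^c W^u_b$. Restricting the $\K$-analytic diffeomorphism (\ref{localprod}) to the slice $\{e\}\times W^c\times W^u_b$ exhibits $N$ as a submanifold of $U\sub G$, with $T_eN = \cg_1\oplus\cg_{>b} = \cg_{\geq 1}$. For the forward-invariance clause, I combine $\alpha(W^c)=W^c$ (see~\ref{nunum}) with the fact that, since $W^u_b$ is itself a local $b$-unstable manifold (see~\ref{prop-7-3}(c)), there is an open $e$-neighbourhood $P_u\sub W^u_b$ satisfying $\alpha(P_u)\sub W^u_b$. Taking $P := W^c P_u$, which is open in $N$ via the product chart $W^c\times W^u_b \to N$, we obtain
\[
\alpha(P) \;=\; \alpha(W^c)\,\alpha(P_u) \;\sub\; W^c W^u_b \;=\; N,
\]
verifying the definition.

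For germ-uniqueness, let $N_1,N_2$ be local centre-unstable manifolds for $\alpha$ around $e$. Work in $\psi$-coordinates. Since $T_eN_i=\cg_{\geq 1}$, the composite of $\psi^{-1}|_{N_i}$ with the projection $\cg\to\cg_{\geq 1}$ is \'{e}tale at $e$, so each $N_i$ is locally the graph of a $\K$-analytic map $h_i\colon V_i\to\cg_{<a}$ on an open $0$-neighbourhood $V_i\sub\cg_{\geq 1}$, with $h_i(0)=0$ and $dh_i(0)=0$. The condition $\alpha(P_i)\sub N_i$ translates into a functional equation for $h_i$ in terms of $F:=\psi^{-1}\alpha\psi$; iterating it forward and exploiting $\|L(\alpha)|_{\cg_{<a}}\|_{\op}<a\leq 1$ from the adapted norm (cf.~\ref{def-adap}) forces $h_i\equiv 0$ on a neighbourhood of $0$. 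Hence both $N_i$ coincide near $e$ with $W^c W^u_b$, giving the common germ.

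The main obstacle will be the uniqueness half. The definition of local centre-unstable manifold is asymmetric (only $f(P)\sub N$ on a sub-neighbourhood is required, not $f(N)\sub N$), and the neutral direction $\cg_1$ blocks the naive contraction argument that settles the pure stable and pure unstable cases. In practice the cleanest route is to cite the Local Invariant Manifold Theorem of \cite{FIN}, which handles exactly this class of functional equations uniformly for the five standard types of invariant manifolds; by contrast, the existence half is a short direct construction once the setup of Section~\ref{locstru} is in hand.
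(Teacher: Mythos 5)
The existence half of your proposal matches the paper's construction essentially verbatim: $N := W^cW^u_b$, use the product chart from~(\ref{localprod}), and take $P := W^c W^u_b(r)$ as the sub-neighbourhood with $\alpha(P)\sub N$. That part is fine.

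The uniqueness half has a genuine gap. You acknowledge the difficulty and propose to resolve it by appealing to the Local Invariant Manifold Theorem of \cite{FIN}, ``which handles exactly this class of functional equations uniformly for the five standard types of invariant manifolds.'' But the paper's own preliminaries in~\ref{uni-germ} carefully enumerate which germs are pinned down by that theorem (together with \cite[Theorem~1.10]{INV}): local $a$-stable, local $b$-unstable, centre, and centre-stable. The centre-unstable case is conspicuously \emph{absent} from that list --- which is precisely why Lemma~\ref{germ-cu} exists as a separate result and is not simply another bullet in~\ref{uni-germ}. For an endomorphism (rather than an automorphism) the centre-unstable case is the one that a forward-iteration / graph-transform machinery does not reach, because the ``base'' directions $\cg_{\geq 1}$ are neutral or expanding under forward iteration and one cannot invert $\alpha$ to iterate backward; you yourself note that ``the neutral direction $\cg_1$ blocks the naive contraction argument.'' Your fallback citation therefore does not close the gap you correctly identified.

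The paper's actual uniqueness argument is of a quite different character. Given a second local centre-unstable manifold $N'$, it invokes Lemma~\ref{enough-uni} to produce an open $e$-neighbourhood $O\sub N'$ with $O\sub\alpha(O)\sub U\cap N'$. Any $x\in O$ then admits an $\alpha$-regressive trajectory lying in $O\sub U$, so $x\in U_+ = (B^\phi_R)_+ = W^cW^u_b = N$ by the group-theoretic characterization of $U_+$; thus $O\sub N$ as a set. Since both $N$ and $N'$ are submanifolds of~$G$ tangent to $\cg_{\geq 1}$ at~$e$, the inclusion $O\hookrightarrow N$ has invertible tangent map at~$e$, and the inverse function theorem produces a common open piece on which $N$ and $N'$ induce the same $\K$-analytic structure. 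This bypasses the functional equation altogether and leans on the interplay between the invariant-manifold picture and Willis's tidy-subgroup decomposition $U=U_+U_-$, which is exactly the extra structure available here that a generic dynamical-systems theorem cannot see.
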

\begin{proof}
The submanifold $N:=W^cW^u_b$
of $U=W^s_aW^cW^u_b\cong W^s_1\times W^c\times W^u_b$
is a local centre-unstable manifold
for $\alpha$ around~$p$,
as $T_eN=T(W^c)\oplus T(W^u_b)=(T_pM)_{\geq 1}$
and $P:=W^cW^u_r$ is an open $e$-neighbourhood
in $N$
such that $\alpha(P)\sub N$.
If also $N'$ is a local centre-unstable manifold for~$\alpha$
around~$p$,
then there exists an open $p$-neighbourhood
$Q\sub N'$ such that $\alpha(Q)\sub N'$.
By Lemma~\ref{enough-uni},
there exists an open $p$-neighbourhood
$O\sub N'$ such that $\alpha(O)$ is open in $N'$
and
\[
O\sub\alpha(O)\sub U\cap N'.
\]
Hence, for each $x\in O$
we find an $\alpha$-regressive trajectory $(x_{-n})_{n\in\N_0}$
in~$O$ such that $x_0=x$. Since $x_{-n}\in O\sub U$ for each $n\in\N_0$,
we have $x\in U_+=N$.
Thus $O\sub N$. As the inclusion map $\iota\colon O\to N$
is $\K$-analytic and $T_p\iota$
is the identity map of $(T_pM)_{\geq 1}$,
the inverse function theorem
shows that $O$ contains an open $p$-neighbourhood~$W$
such that $W=j(W)$ is open in~$N$ and $\id=j|_W\colon W\to W$
is a $\K$-analytic diffeomorphism.
Thus $N'$ and $N$ induce the same
$\K$-analytic manifold
structure on their joint open subset~$W$,
whence the germs of $N$ and $N'$ at~$p$
coincide.
\end{proof}
\begin{la}\label{con-loc}
We have $\con(\alpha)\cap W^s_a(r)W^c(r)=W_s^a(r)$.
Moreover, $W^u_b(r)$
is the set of all $x\in W^c(r)W^u_b(r)$
admitting an $\alpha$-regressive
trajectory $(x_{-n})_{n\in\N_0}$
in $W^c(r)W^u_b(r)$ such that $x_0=x$ and
$x_{-n}\to e$
for $n\to\infty$.
\end{la}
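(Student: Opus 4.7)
The plan is to deduce both assertions from the uniqueness of the local product decomposition $B_r^\phi=W^s_a(r)\cdot W^c(r)\cdot W^u_b(r)$ provided by the $\K$-analytic diffeomorphism (\ref{localprod}), together with the $\alpha$-invariance properties of the three factors recalled in Section~\ref{locstru}.

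For the first identity, the inclusion $W^s_a(r)\subseteq \con(\alpha)\cap W^s_a(r)W^c(r)$ follows from (\ref{subcontra}) and $e\in W^c(r)$. Conversely, if $x=yz\in\con(\alpha)$ with $y\in W^s_a(r)$ and $z\in W^c(r)$, then $\alpha^n(x)=\alpha^n(y)\alpha^n(z)$ with $\alpha^n(y)\in W^s_a(r)$ and $\alpha^n(z)\in W^c(r)$, by the invariances $\alpha(W^s_a(r))\subseteq W^s_a(r)$ and $\alpha(W^c(r))=W^c(r)$. Applying the continuous inverse of the restricted product map $W^s_a(r)\times W^c(r)\to W^s_a(r)W^c(r)$ to $\alpha^n(x)\to e$ yields $\alpha^n(y)\to e$ and $\alpha^n(z)\to e$, and it remains to show $z=e$.

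For the second identity, the inclusion of $W^u_b(r)$ is built into~\ref{subu}. Conversely, given $x\in W^c(r)W^u_b(r)$ with a regressive trajectory $(x_{-n})$ in $W^c(r)W^u_b(r)$ tending to~$e$, I would decompose $x_{-n}=z_{-n}y_{-n}$ uniquely with $z_{-n}\in W^c(r)$ and $y_{-n}\in W^u_b(r)$, and exploit that the choice of~$r$ in~\ref{subu} ensures $\alpha(W^u_b(r))\subseteq W^u_b$, so that
\[
\alpha(z_{-n-1})\alpha(y_{-n-1})=\alpha(x_{-n-1})=x_{-n}=z_{-n}y_{-n}
\]
are both valid decompositions in $W^c\cdot W^u_b$. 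Uniqueness of the latter (from the global diffeomorphism (\ref{localprod})) forces $\alpha(z_{-n-1})=z_{-n}$ and $\alpha(y_{-n-1})=y_{-n}$, so $(z_{-n})$ is an $\alpha$-regressive trajectory of $z_0$ in $W^c(r)$ tending to~$e$. Since $\alpha$ restricts to a group automorphism of the compact subgroup $W^c(r)$, this says $z_0\in\con(\alpha^{-1}|_{W^c(r)})$, and it remains to prove $z_0=e$.

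Both parts therefore reduce to showing $\con(\alpha|_{W^c(r)})=\con(\alpha^{-1}|_{W^c(r)})=\{e\}$, which is the main obstacle (it does \emph{not} follow from $W^c(r)$ being merely a compact group with an automorphism, as shift automorphisms of profinite product groups already exhibit non-trivial contraction groups). The key is that $L(\alpha)|_{\cg_1}$ is an isometry in the adapted norm $\|\cdot\|$, because every characteristic value of this restriction equals~$1$ and property~(c) of \ref{def-adap} gives $\|L(\alpha)(v)\|=\|v\|$ for $v\in\cg_1$. In the chart~$\phi_c$, the map $\alpha|_{W^c}$ is represented by a $\K$-analytic map $\tilde\alpha\colon B^{\cg_1}_R\to B^{\cg_1}_R$ with $\tilde\alpha(0)=0$ and $D\tilde\alpha(0)=L(\alpha)|_{\cg_1}$, and a standard second-order Taylor estimate gives $\|\tilde\alpha(v)-L(\alpha)(v)\|\leq C\|v\|^2$ on some ball. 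Shrinking $R$ once more (harmless in view of the repeated shrinkings already performed in Section~\ref{locstru}) so that $Cr<1$, the ultrametric inequality yields $\|\tilde\alpha(v)\|=\|L(\alpha)(v)\|=\|v\|$ for every $v\in B^{\cg_1}_r$. Since $\alpha(W^c(r))=W^c(r)$, iterating gives $\|\phi_c(\alpha^n(w))\|=\|\phi_c(w)\|$ for all $n\in\Z$ and all $w\in W^c(r)$, which rules out forward or backward convergence to~$e$ of any non-identity element of $W^c(r)$; the remainder of the argument is bookkeeping with the product decomposition.
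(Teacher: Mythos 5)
Your proposal is correct and follows the same overall structure as the paper's proof: reduce both assertions to the triviality of the forward (respectively backward) contraction behaviour of $\alpha$ on $W^c(r)$, using uniqueness of the components in the decomposition (\ref{localprod}) together with the $\alpha$-invariance of the three factors. You rightly observe that this triviality is not automatic for a compact group with an automorphism, and you establish it by showing that the chart representative of $\alpha|_{W^c}$ becomes an isometry for the adapted norm after a further shrinking of~$R$. That argument works, but it re-derives a fact the paper has already secured in~\ref{nunum}: after the shrinkings carried out in Section~\ref{locstru}, one has $\alpha(W^c(t))=W^c(t)$ for every $t\in\,]0,R]$, which is precisely the isometry property rephrased in terms of the filtration $W^c(t)=\phi_c^{-1}(B^{\cg_1}_t)$. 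Granted this, the paper argues directly: for $z\in W^c(r)\setminus\{e\}$ pick $t<r$ with $z\notin W^c(t)$; since $\alpha$ preserves $W^c(t)$ bijectively, $\alpha^n(z)\notin W^c(t)$ for all $n\in\N_0$, so $\alpha^n(x)=\alpha^n(y)\alpha^n(z)$ stays outside $B^\phi_t$ and cannot converge to~$e$; and for the regressive case, $y_{-n}\to e$ forces $y_{-n}\in W^c(t)$ for large~$n$, whence $y_0=\alpha^n(y_{-n})\in W^c(t)$, a contradiction. So your reduction and bookkeeping agree with the paper's; the Taylor-estimate detour is a valid but unnecessary re-proof of~\ref{nunum}, and citing that item directly makes the argument shorter.
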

\begin{proof}
If $x\in W^s_a(r)$, then $\alpha^n(x)\to e$
as $n\to\infty$ and thus $x\in\con(\alpha)$,
by~(\ref{subcontra}).
Now assume that $x\in\con(\alpha)\cap W^s_a(r)W^c(r)$.
Then
$x=yz$ for unique $y\in W^s_a(r)$ and $z\in W^c(r)$.
If we had $z\not=e$, then we could find
$t\in \,]0,r[$
such that $z\not\in W^c(t)$.
Then $\alpha^n(y)\in W^s_a(r)$
for each $n\in \N_0$.
Since $\alpha|_{W^c(r)}\colon W^c(r)\to W^c(r)$
is a bijection  which takes $W^c(t)$ onto itself,
we deduce that $\alpha^n(z)\in W^c(r)\setminus W^c(t)$
for all $n\in \N_0$,
entailing that the group element $\alpha^n(x)=\alpha^n(y)\alpha^n(z)$
is in $B^\phi_r\setminus B^\phi_t$
for each $n\in\N_0$.
Hence $\alpha^n(x)\not\to e$, contradiction.
Thus $z=0$ and thus $x\in W^s_a(r)$.\\[2mm]
By \ref{subu}, each $x\in W^u_b(r)$
has an $\alpha$-regressive trajectory of the
asserted form.
Now let $x\in W^c(r)W^u_b(r)$
and assume there exists an $\alpha$-regressive
trajectory $(x_{-n})_{n\in\N_0}$
in $W^r(r)W^u_b(r)$ such that $x_0=x$ and $x_{-n}\to e$
for $n\to\infty$.
Write $x_{-n}=y_{-n}z_{-n}$
with $y_{-n}\in W^c(r)$ and $z_{-n}\in W^u_b(r)$.
Then $y_{-n}\to e$ and $z_{-n}\to e$ as $n\to\infty$.
For each $n\in\N$, we have
\[
y_{-n+1}z_{-n+1}=x_{-n+1}=\alpha(x_{-n})=\alpha(y_{-n})\alpha(z_{-n})
\]
with $\alpha(y_{-n})\in W^c(r)$ and $\alpha(z_{-n})\in \alpha(W^u_b(r))\sub W^u_b$.
As the product map $W^c\times W^u_b\to W^cW^u_b$
is a bijection, we deduce that $y_{-n+1}=\alpha(y_{-n})$
and $z_{-n+1}=\alpha(z_{-n})$.
If we had $y\not=e$,
we could find $t\in\,]0,r[$ such that $y\not\in W^c(t)$.
There would be some $N\in\N$
such that $x_{-n}\in W^c(t)$ for all $n\geq N$.
Since $\alpha(W^c(t))=W^c(t)$,
this would imply $y=\alpha^N(y_{-N})\in W^c(t)$,
a contradiction. Thus $y=0$
and thus $x=z\in W^u_b(r)$.
\end{proof}
\begin{rem}\label{union-con}
Since $W^u_b(r)\sub \alpha(W^u_b(r))$
by \ref{subu}, $(\alpha^n(W^u_b(r))_{n\in\N}$
is an ascending sequence of
compact subgroups of $\con^-(\alpha)$. Moreover,
\[
\con^-(\alpha)=\bigcup_{n\in\N}\alpha^n(W^u_b(r)).
\]
In fact, for $x\in\con^-(\alpha)$
there exists an $\alpha$-regressive
trajectory $(x_{-n})_{n\in\N}$
such that $x_0=x$ and $x_{-n}\to e$ as $n\to\infty$.
There exists $N\in\N$ such that $x_{-n}\in W^s_a(r)W^c(r)W^u_b(r)=B^\phi_r$
for all $n\geq N$. For each $n\geq N$,
the element $x_{-n}$ of $B^\phi_r$
has the $\alpha$-regressive trajectory $(x_{-n-m})_{m\in\N_0}$
in $B^\phi_r$,
whence $x_{-n}\in (B^\phi_r)_+=W^c(r)W^u_b(r)$.
As $x_{-n-m}\in W^c(r)W^u_b(r)$
and $x_{-n-m}\to e$ as $m\to\infty$,
Lemma~\ref{con-loc} shows that $x_{-n}\in W^u_b(r)$
for each $n\geq N$.
In particular, $x_{-N}\in W^u_b(r)$ and $x=x_0=\alpha^N(x_{-N})\in\alpha^N(W^u_b(r))$.
\end{rem}
\section{Proof of Theorems~\ref{thmA} and \ref{thmB}}
We retain the notation of Section~\ref{locstru}.
As is well known,
a homomorphism $f\colon H\to K$ between $\K$-analytic Lie groups
is $\K$-analytic whenever it is $\K$-analytic
on an open $e$-neighbourhood in~$H$.
Applying this to $f=\id_H$,
we see that two Lie group structures on~$H$
coincide if their germ at $e$ coincides.
The uniqueness of the Lie group
structures in parts~(a), (b), (c), and (d)
of Theorem~\ref{thmA}
therefore follows
from the uniqueness statements
concerning manifold germs
in \ref{uni-germ}, \ref{also-nonhypo},
and \ref{nonhypo2};
the uniqueness statement in~(e)
follows from Lemma~\ref{germ-cu}.
It remains to prove the existence of the asserted
Lie group structures, and that they have the properties
described in Theorem~\ref{thmB}.\\[1mm]
\emph{Contraction groups.}
Since $W_a^s$ is a subgroup of~$G$
and a submanifold, it is a Lie subgroup.
By Lemma~\ref{con-loc}, we have $W^s_a\sub\con(\alpha)$.
If $g\in\con(\alpha)$, then $\{\alpha^n(g)\colon n\in\N_0\}$
is relatively compact, whence
there exists $t\in \,]0,r]$
such that
\[
\alpha^n(g)B^\phi_t\alpha^n(g)^{-1}\sub B^\phi_r
\]
for all $n\in\N_0$.
For each $x\in W^s_a(t)$ and each $n\in\N_0$, we then have
\[
\alpha^n(gxg^{-1})=\alpha^n(g)\alpha^n(x)\alpha^n(g)^{-1}\in B^\phi_r,
\]
as $\alpha^n(x)\in W^s_a(t)$.
As a consequence, $gxg^{-1}\in (B^\phi_r)_-=W^s_a(r)W^c(r)$.
Moreover, $gxg^{-1}\in \con(\alpha)$
as $g\in\con(\alpha)$ and $x\in W^s_a(t)\sub \con(\alpha)$.
Hence $gxg^{-1}\in W^s_a(r)$, by Lemma~\ref{con-loc}.
Being a restriction of the $\K$-analytic conjugation map
$G\to G$, $h\mto ghg^{-1}$,
the map
\[
W^s_a(t)\to W^s_a,\quad x\mto gxg^{-1}
\]
is $\K$-analytic.
By the Local Description of
Lie Group Structures (see Proposition 18
in \cite[Chapter~III,\S1, no.\,9]{Bou}),
we get a unique $\K$-analytic manifold
structure on $\con(\alpha)$
making it a Lie group $\con_*(\alpha)$,
such that $W^s_a$
is an open submanifold.
As $W^s_a$ is a submanifold of~$G$,
$\con_*(\alpha)$
is an immersed Lie subgroup of~$G$.
Now $\alpha(W^s_a)\sub W^s_a$
and $\alpha|_{W^s_a}\colon W^s_a\to W^s_a$
is $\K$-analytic since $\alpha$
is $\K$-analytic and $W^s_a$ is a submanifold
of~$G$.
The restriction $\alpha_s$ of $\alpha$ to
an endomorphism of the subgroup $\con_*(\alpha)$
coincides with $\alpha|_{W^s_a}$ on the open subset
$W^s_a$ of~$\con_*(\alpha)$, whence $\alpha_s$
is $\K$-analytic.
If $g\in \con(\alpha)$,
there exists $N\in\N$ such that
$\alpha^n(g)\in B^\phi_r$ for all
$n\geq N$, whence
$\alpha^n(g)\in (B^\phi_r)_-=W^s_a(r)W^c(r)$.
Since also $\alpha^n(g)\in \con(\alpha)$,
Lemma~\ref{con-loc} shows that $\alpha^n(g)\in W^s_a(r)$.
As $\con_*(\alpha)$ hast $W^s_a$ as an open submanifold
and $\alpha^n(g)\to e$ in $W^s_a$ as $N\leq n\to\infty$,
we see that $(\alpha_s)^n(g)=\alpha^n(g)\to e$ also
in $\con_*(\alpha)$. Thus $\con(\alpha_s)=\con_*(\alpha)$.\\[2mm]
\emph{Anti-contraction groups.}
Being a subgroup of~$G$ and a submanifold,
$W_u^b$ is a Lie subgroup.
By Lemma~\ref{con-loc},
we have $W^u_b\sub\con^-(\alpha)$.
If $g\in\con^-(\alpha)$, then
there exists an $\alpha$-regressive
trajectory $(g_{-n})_{n\in\N_0}$
such that $g_0=g$ and $g_{-n}\to e$
as $n\to\infty$.
Notably,
$\{g_{-n}\colon n\in\N_0\}$
is relatively compact, whence
there exists $t\in \,]0,r]$
such that
\[
g_{-n}B^\phi_t(g_{-n})^{-1}\sub B^\phi_r
\]
for all $n\in\N_0$.
For each $x\in W^u_b(t)$,
there exists an $\alpha$-regressive
trajectory $(x_{-n})_{n\in\N_0}$
in $W^u_b(t)$ such that $x_0=x$ and $x_{-n}\to e$
as $n\to\infty$ (see \ref{subu}).
Then $(g_{-n}x_{-n}(g_{-n})^{-1})_{n\in\N_0}$
is an $\alpha$-regressive trajectory
for $gxg^{-1}$
such that $g_{-n}x_{-n}(g_{-n})^{-1}\to e$
as $n\to\infty$ and
\[
g_{-n}x_{-n}(g_{-n})^{-1}\in g_{-n}B^\phi_t (g_{-n})^{-1}\sub B^\phi_r
\]
for all $n\in\N_0$, whence $g_{-n}x_{-n}(g_{-n})^{-1}\in (B^\phi_r)_+=
W^c(r)W^u_b(r)$ for all $n\in\N_0$
and thus $gxg^{-1}\in W^u_b(r)$,
by Lemma~\ref{con-loc}.
Being a restriction of the $\K$-analytic conjugation map
$G\to G$, $h\mto ghg^{-1}$,
the map
\[
W^u_b(t)\to W^u_b,\quad x\mto gxg^{-1}
\]
is $\K$-analytic.
Using the Local Description of
Lie Group Structures,
we get a unique $\K$-analytic manifold
structure on $\con^-(\alpha)$
making it a Lie group $\con^-_*(\alpha)$,
such that $W^u_b$
is an open submanifold.
As $W^u_b$ is a submanifold of~$G$,
$\con^-_*(\alpha)$
is an immersed Lie subgroup of~$G$.
Now $\alpha(W^u_b(r))\sub W^u_b$
and $\alpha|_{W^u_b(r)}\colon W^u_b(r)\to W^u_b$
is $\K$-analytic since $\alpha$
is $\K$-analytic and $W^u_b$ is a submanifold
of~$G$.
The restriction $\alpha_u$ of $\alpha$ to
an endomorphism of $\con_*^-(\alpha)$
coincides with $\alpha|_{W^u_b(r)}$ on the open subset
$W^u_b(r)$ of~$\con^-_*(\alpha)$, whence $\alpha_u$
is $\K$-analytic.
If $g\in \con^-(\alpha)$,
then there exists an $\alpha$-regressive
trajectory $(g_{-n})_{n\in\N_0}$
such that $g_0=g$ and $g_{-n}\to e$ in~$G$
as $n\to\infty$.
Then $g_{-n}\in \con^-(\alpha)$
for all $n\in\N_0$ and
we have seen in Remark~\ref{union-con}
that there exists an $N\in\N$ such that
$g_{-n}\in W^u_b(r)$ for all $n\geq N$.
As $\con^-_*(\alpha)$ and $G$
induce the same topology on $W^u_b(r)$,
we deduce that $g_{-n}\to e$ also
in $\con^-_*(\alpha)$.
Thus $g\in \con^-(\alpha_u)$.\\[2mm]
\emph{Parabolic subgroups.}
$(B^\phi_r)_-=W_a^s(r)W^c(r)$
is a Lie subgroup of~$G$
and a subset of $\parb(\alpha)$.
If $g\in\parb(\alpha)$, then $\{\alpha^n(g)\colon n\in\N_0\}$
is relatively compact, whence
there exists $t\in \,]0,r]$
such that
\[
\alpha^n(g)B^\phi_t\alpha^n(g)^{-1}\sub B^\phi_r
\]
for all $n\in\N_0$.
For each $x\in W^s_a(t)W^c(t)$,
we have $\alpha^n(x)\in W^s_a(t)W^c(t)$
for all $n\in\N_0$ and thus
\[
\alpha^n(gxg^{-1})=\alpha^n(g)\alpha^n(x)\alpha^n(g)^{-1}
\in \alpha^n(g)B^\phi_t\alpha^n(g)^{-1}\sub B^\phi_r,
\]
whence $gxg^{-1}\in (B^\phi_r)_-=W^s_a(r)W^c(r)$.
Being a restriction of the $\K$-analytic conjugation map
$G\to G$, $h\mto ghg^{-1}$,
the map
\[
W^s_a(t)W^c(t)\to W^s_a(r)W^c(r),\quad x\mto gxg^{-1}
\]
is $\K$-analytic.
By the Local Description of
Lie Group Structures,
we get a unique $\K$-analytic manifold
structure on $\parb(\alpha)$
making it a Lie group $\parb_*(\alpha)$,
such that $W^s_a(r)W^c(r)$
is an open submanifold.
As $W^s_a(r)W^c(r)$ is a submanifold of~$G$,
$\parb_*(\alpha)$
is an immersed Lie subgroup of~$G$.
Now $\alpha(W^s_a(r)W^c(r))\sub W^s_a(r)W^c(r)$
and $\alpha|_{W^s_a(r)W^c(r)}\colon W^s_a(r)W^c(r)\to W^s_a(r)W^c(r)$
is $\K$-analytic.
As a consequence, the
restriction $\alpha_{cs}$ of $\alpha$ to
an endomorphism of the subgroup $\parb_*(\alpha)$
is $\K$-analytic.
If $g\in W^s_a(r)W^c(r)$,
then $\{\alpha^n(g)\colon n\in\N_0\}$
is contained in the compact open subgroup $W^s_a(r)W^c(r)$
of $\parb_*(\alpha)$,
whence $g\in \parb(\alpha_{cs})$.
Thus $W^s_a(r)W^c(r)\sub\parb(\alpha_{cs})$,
entailing that the subgroup
$\parb(\alpha_{cs})$ is an $e$-neighbourhood
and hence open in $\parb_*(\alpha)$.\\[2mm]
\emph{Antiparabolic subgroups.}
$(B^\phi_r)_+=W^c(r)W_u^b(r)$ is a Lie subgroup
of~$G$ and a subgroup of $\parb_-(\alpha)$.
If $g\in\parb^-(\alpha)$, then
there exists an $\alpha$-regressive
trajectory $(g_{-n})_{n\in\N_0}$
such that $g_0=g$ and
$\{g_{-n}\colon n\in\N_0\}$
is relatively compact.
Thus,
there exists $t\in \,]0,r]$
such that
\[
g_{-n}B^\phi_t(g_{-n})^{-1}\sub B^\phi_r
\]
for all $n\in\N_0$.
For each $x\in W^c(t)W^u_b(t)=(B^\phi_t)_+$,
there exists an $\alpha$-regressive
trajectory $(x_{-n})_{n\in\N_0}$
in $W^c(t)W^u_b(t)$ such that $x_0=x$.
Then $(g_{-n}x_{-n}(g_{-n})^{-1})_{n\in\N_0}$
is an $\alpha$-regressive trajectory
for $gxg^{-1}$
such that
\[
g_{-n}x_{-n}(g_{-n})^{-1}\in g_{-n}B^\phi_t (g_{-n})^{-1}\sub B^\phi_r
\]
for all $n\in\N_0$, whence
$gxg^{-1}\in (B^r_\phi)_+=W^c(r)W^u_b(r)$.
Being a restriction of the $\K$-analytic conjugation map
$G\to G$, $h\mto ghg^{-1}$,
the map
\[
W^c(t)W^u_b(t)\to W^c(r)W^u_b(r),\quad x\mto gxg^{-1}
\]
is $\K$-analytic.
Using the Local Description of
Lie Group Structures,
we get a unique $\K$-analytic manifold
structure on $\parb^-(\alpha)$
making it a Lie group $\parb^-_*(\alpha)$,
such that $W^c(r)W^u_b(r)$
is an open submanifold.
As $W^c(r)W^u_b(r)$ is a submanifold of~$G$,
$\parb^-_*(\alpha)$
is an immersed Lie subgroup of~$G$.
Since $\alpha|_{W^u_b(r)}\colon W^u_b(r)\to W^u_b$
is continuous and $W^u_b(r)$ is open in $W^u_b$,
there exists $t\,]0,r]$
such that $\alpha(W^u_b(t))\sub W^u_b(r)$.
Now $\alpha(W^c(t)W^u_b(t))\sub W^c(r)W^u_b(r)$,
entailing that
the restriction $\alpha_{cu}$ of $\alpha$ to
an endomorphism of $\parb_*^-(\alpha)$
is $\K$-analytic.
If $g\in W^c(r)W^u_b(r)=(B^r_\phi)_+$,
then there exists an $\alpha$-regressive
trajectory $(g_{-n})_{n\in\N_0}$ in $B^\phi_r$
such that $g_0=g$.
For each $n\in\N_0$,
the sequence $(g_{-n-m})_{m\in\N_0}$
is an $\alpha$-regressive trajectory in $B^r_\phi$
for $g_{-n}$, whence $g_{-n}\in (B^r_\phi)_+=W^c(r)W^u_b(r)$.
As $\parb^-_*(\alpha)$ and~$G$
induce the same topology on $W^c(r)W^b_u(r)$,
we deduce that $g\in \parb^-(\alpha_{cu})$.
Thus $W^c(r)W^u_b(r)\sub\parb^-(\alpha_{cu})$,
showing that the latter is an open subgroup
of $\parb^-_*(\alpha)$.\\[2mm]
\emph{Levi subgroups.}
$W^c(r)$ is a Lie subgroup
of~$G$ and a subgroup of $\lev(\alpha)$.
If $g\in\lev(\alpha)=\parb(\alpha)\cap\parb^-(\alpha)$,
our discussion of $\parb_*(\alpha)$ and
$\parb^-_*(\alpha)$
yield a $t\in\,]0,r]$ such that
$g W^s_a(t)W^c(t)g^{-1}\sub W^s_a(r)W^c(r)$
and $gW^c(t)W^u_b(t)g^{-1}\sub W^c(r)W^u_b(r)$,
whence
\[
gW^c(t)g^{-1}\sub W^s_a(r)W^c(r)\cap W^c(r)W^u_b(r)=W^c(r).
\]
Being a restriction of the $\K$-analytic conjugation map
$G\to G$, $h\mto ghg^{-1}$,
the map
\[
W^c(t)\to W^c(r),\quad x\mto gxg^{-1}
\]
is $\K$-analytic.
Using the Local Description of
Lie Group Structures,
we get a unique $\K$-analytic manifold
structure on $\lev(\alpha)$
making it a Lie group $\lev_*(\alpha)$,
such that $W^c(r)$
is an open submanifold.
As $W^c(r)$ is a submanifold of~$G$,
$\lev_*(\alpha)$
is an immersed Lie subgroup of~$G$.
Now $\alpha(W^c(r))=W^c(r)$,
entailing that
the restriction $\alpha_c$ of $\alpha$ to
an endomorphism of $\lev_*(\alpha)$
is $\K$-analytic.
If $g\in W^c(r)$,
then there exists an $\alpha$-regressive
trajectory $(g_{-n})_{n\in\N_0}$ in $W^c(r)$
such that $g_0=g$. Moreover, $\alpha^n(g)\in W^c(r)$
for all $n\in\N_0$.
As $\lev_*(\alpha)$ and~$G$
induce the same topology on $W^c(r)$,
we deduce that $g\in \parb^-(\alpha_c)$
and $g\in\parb(\alpha_c)$.
Thus $g\in\lev(\alpha_c)$,
and
thus $W^c(r)\sub\lev(\alpha_c)$,
showing that the latter is an open subgroup
of $\lev_*(\alpha)$. $\,\square$
\section{Proof of Theorem~\ref{thmC}}
We start with a lemma.
\begin{la}\label{inclu}
If $G$ is a $\K$-analytic Lie group
over a totally disconnected local field~$\K$
and $\alpha\colon G\to G$ a $\K$-analytic endomorphism,
then the inclusion maps
\[
\lev_*(\alpha)\to\parb_*(\alpha)\quad \mbox{and}\quad
\lev_*(\alpha)\to\parb^-_*(\alpha)
\]
are $\K$-analytic group homomorphisms
and immersions. The actions
\[
\parb_*(\alpha)\times\con_*(\alpha)\to\con_*(\alpha)\quad\mbox{and}\quad
\parb^-_*(\alpha)\times \con^-_*(\alpha)\to\con^-_*(\alpha)
\]
given by $(g,x)\mto gxg^{-1}$ are $\K$-analytic.
\end{la}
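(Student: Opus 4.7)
The proof plan divides into two parts matching the statement.

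For the inclusions $\lev_*(\alpha)\to\parb_*(\alpha)$ and $\lev_*(\alpha)\to\parb^-_*(\alpha)$, I would work with the open $e$-neighbourhoods $W^c(r)$ of $\lev_*(\alpha)$, $W^s_a(r)W^c(r)$ of $\parb_*(\alpha)$, and $W^c(r)W^u_b(r)$ of $\parb^-_*(\alpha)$, all of which are submanifolds of $G$ by construction. The $\K$-analytic diffeomorphism (\ref{localprod}) realises $W^c(r)$ as a submanifold of both $W^s_a(r)W^c(r)$ and $W^c(r)W^u_b(r)$. Hence each inclusion is $\K$-analytic and an immersion on this open $e$-neighbourhood, the tangent map at $e$ being the injection $\cg_1\hookrightarrow\cg_{<a}\oplus\cg_1$, respectively $\cg_1\hookrightarrow\cg_1\oplus\cg_{>b}$. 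Since a group homomorphism between $\K$-analytic Lie groups is $\K$-analytic, and immersive, globally as soon as it is so on an open $e$-neighbourhood, the first assertion follows.

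For the two conjugation actions, I would verify the hypotheses of fact~\ref{act-ana} for $\sigma\colon\parb_*(\alpha)\times\con_*(\alpha)\to\con_*(\alpha)$, $(g,x)\mapsto gxg^{-1}$. Hypothesis (a) is immediate since $\parb(\alpha)$ normalises $\con(\alpha)$ (recalled just before Theorem~\ref{thmC}). For hypothesis (d), take $U:=W^s_a(r)W^c(r)=(B^\phi_r)_-$ and $V:=W^s_a(t)$ with $t\leq r$: for $g\in U$ one has $\alpha^n(g)\in B^\phi_r$ for all $n\geq 0$, and since $B^\phi_t$ is normal in $B^\phi_R$, we get $\alpha^n(gxg^{-1})\in B^\phi_t$ for each $x\in V$, so $gxg^{-1}\in (B^\phi_t)_-\cap\con(\alpha)=W^s_a(t)$ by Lemma~\ref{con-loc}; the map is then the restriction to submanifolds of the $\K$-analytic conjugation on $G$. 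Hypothesis (b) is handled in the same way: for fixed $g\in\parb(\alpha)$, relative compactness of $\{\alpha^n(g)\}$ gives a $t$ such that $\alpha^n(g)B^\phi_t\alpha^n(g)^{-1}\subseteq B^\phi_r$ for all $n$, and the rest of the argument is identical.

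The main obstacle is hypothesis (c), where $x\in\con(\alpha)$ is fixed but need not lie in $W^s_a$. I would use the left translate $xW^s_a$ as an open chart around $x$ in $\con_*(\alpha)$; being a left translate of the submanifold $W^s_a$ of $G$, it is itself a submanifold of $G$. Since $\con(\alpha)=\con(\alpha_s)$ by Theorem~\ref{thmB}, there exists $N$ with $\alpha^n(x)\in B^\phi_t$ for all $n\geq N$. The normality argument then yields $\alpha^n(x^{-1}gxg^{-1})\in B^\phi_t$ for $n\geq N$ whenever $g\in(B^\phi_r)_-$, while continuity of finitely many $\K$-analytic maps in $g$ forces $\alpha^n(x^{-1}gxg^{-1})\in B^\phi_r$ for $0\leq n<N$ provided $g$ lies in a sufficiently small open $e$-neighbourhood $P\subseteq(B^\phi_r)_-\cap\parb_*(\alpha)$. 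Hence $x^{-1}gxg^{-1}\in(B^\phi_r)_-\cap\con(\alpha)=W^s_a(r)$ by Lemma~\ref{con-loc}, so $\sigma^x(g)\in xW^s_a$ and $\sigma^x|_P$ is $\K$-analytic as a restriction of the $G$-conjugation. The second action $\parb^-_*(\alpha)\times\con^-_*(\alpha)\to\con^-_*(\alpha)$ is treated analogously, using $\alpha$-regressive trajectories in place of forward orbits, fact~\ref{comp-to-id}, Remark~\ref{union-con}, and the second part of Lemma~\ref{con-loc}.
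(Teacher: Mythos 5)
Your proposal is correct and follows essentially the same route as the paper: identify the local charts $W^c(r)$, $W^s_a(r)W^c(r)$, $W^c(r)W^u_b(r)$ for the inclusion part, and verify the four hypotheses of \ref{act-ana} using Lemma~\ref{con-loc} for the conjugation actions. Your only departures are cosmetic: you invoke the algebraic fact that $\parb(\alpha)$ normalises $\con(\alpha)$ together with the normality of $B^\phi_t$ in $B^\phi_R$ where the paper instead applies \ref{comp-to-id} to deduce $\alpha^n(gxg^{-1})\to e$; and for hypothesis (c) you pre-multiply $x^{-1}gxg^{-1}$ into the chart $xW^s_a$, whereas the paper post-multiplies $gxg^{-1}x^{-1}$ by $x$ within $\con_*(\alpha)$.
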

\begin{proof}
The first assertion follows from the fact
that $W^c(r)$, $W^s_a(r)W^c(r)$,
and $W^c(r)W^u_b(r)$ are open $e$-neighbourhoods
in $\lev_*(\alpha)$, $\parb_*(\alpha)$ and $\parb^-_*(\alpha)$,
respectively, and the product maps $W^s_a(r)\times W^c(r)\to W^s_a(r)W^c(r)$
and $W^c(r)\times W^u_b(r)\to W^c(r)W^u_b(r)$ are $\K$-analytic
diffeomorphisms.\\[2mm]
To see that the action of $\parb_*(\alpha)$
on $\con_*(\alpha)$ is $\K$-analytic, we verify the hypotheses of
Lemma~\ref{act-ana}.
For each $g\in\parb_*(\alpha)$,
the set $\{\alpha^n(g)\colon n\in\N_0\}$
is relatively compact in~$G$, whence we find
$t\in\,]0,r]$ such that $\alpha^n(g)B^\phi_t\alpha^n(g)^{-1}\sub B^\phi_r$.
For $x\in W^s_a(t)$, we have $\alpha^n(x)\in W^s_a(t)\sub B^\phi_t$
for each $n\in\N_0$, whence $\alpha^n(gxg^{-1})\in B^\phi_r$
and thus $gxg^{-1}\in (B^\phi_r)_-=W^s_a(r)W^c(r)$.
As
\begin{equation}\label{again-2-id}
\alpha^n(gxg^{-1})=\alpha^n(g)\alpha^n(x)\alpha^n(g)^{-1}\to e
\end{equation}
by \ref{comp-to-id}, Lemma~\ref{con-loc} shows that
$gxg^{-1}\in W^s_a(r)$.
As the map $W^s_a(t)\to W^s_a(r)$, $x\mto gxg^{-1}$
is $\K$-analytic, so ist $W^s_a(t)\to\con_*(\alpha)$,
$x\mto gxg^{-1}$.
Next, we note that if $g\in W^s_a(r)W^c(r)$ and $x\in W^c(r)$,
then $gxg^{-1}\in B^\phi_r$
and
\[
\alpha^n(gxg^{-1})=\alpha^n(g)\alpha^n(x)\alpha^n(g)^{-1}\in B^\phi_r
\]
for all $n\in\N_0$
as $W^s_a(r)W^c(r)$ and $W^s_a(r)$ are $\alpha$-invariant.
Hence $gxg^{-1}\in (B^\phi_r)_-=W^a_s(r)W^c(r)$.
Moreover, (\ref{again-2-id}) holds
by \ref{comp-to-id}, whence $gxg^{-1}\in W^s_a(r)$
by Lemma~\ref{con-loc}.
Thus
\[
W^s_a(r)W^c(r)\times W^s_a(r)\to W^s_a(r)\sub\con_*(\alpha),\quad
(g,x)\mto gxg^{-1}
\]
is $\K$-analytic. Finally, let
$x\in \con_*(\alpha)$ be arbitrary.
There exists $N\in\N$ such that $\alpha^n(x)\in B^\phi_r$
for all $n\geq N$.
There exists $t\in\,]0,r]$ such that
\[
\alpha^n(gxg^{-1}x^{-1})\in B^\phi_r\quad
\mbox{for all $n\in \{0,\ldots, N-1\}$
and $g\in W^s_a(t)W^c(t)$.}
\]
Let $g\in W^s_a(t)W^c(t)$. For all $n\geq N$ we have
$\alpha^n(g)\in W^s_a(t)W^c(t)\sub B^\phi_t\sub B^\phi_r$
and $\alpha^n(x)\in B^\phi_r$, whence
\[
\alpha^n(gxg^{-1}x^{-1})=\alpha^n(g)\alpha^n(x)
\alpha^n(g)^{-1}\alpha^n(x)^{-1}\in B^\phi_r.
\]
Hence $gxgx^{-1}\in (B^\phi_r)_-$.
Since $\alpha^n(x)\to e$ holds
and (\ref{again-2-id}), we deduce that
\[
\alpha^n(gxg^{-1}x^{-1})=\alpha^n(gxg^{-1})\alpha^n(x)^{-1}\to e
\]
as $n\to \infty$. Thus $gxg^{-1}x^{-1}\in W^s_a(r)$,
by Lemma~\ref{con-loc}, entailing that the map
\[
W^s_a(t)W^c(t)\to W^s_a(r)\sub\con_*(\alpha),\quad g\mto gxg^{-1}x^{-1}
\]
is $\K$-analytic and hence also the map
$W^s_a(t)W^c(t)\to\con_*(\alpha)$, $g\mto gxg^{-1}$;
all hypotheses of Lemma~\ref{act-ana}
are verified.\\[2mm]
To see that the conjugation action
of $\parb^-_*(\alpha)$ on $\con^-_*(\alpha)$ is $\K$-analytic,
again we verify the hypotheses of Lemma~\ref{act-ana}.
For each $g\in\parb^-_*(\alpha)$,
there exists an $\alpha$-regressive trajectory
$(g_{-n})_{n\in\N_0}$
with $g_0=g$ such that
$\{g_{-n}\colon n\in\N_0\}$
is relatively compact in~$G$.
There exists
$t\in\,]0,r]$ such that $g_{-n}B^\phi_t (g_{-n})^{-1} \sub B^\phi_r$.
For each $x\in W^u_b(t)$,
there exists an $\alpha$-regressive trajectory $(x_{-n})_{n\in\N_0}$
in $W^u_b(t)$ such that $x_0=x$.
Then $g_{-n}x_{-n}(g_{-n})^{-1}
\in B^\phi_r$ for each $n\in\N_0$.
As $(g_{-n-m}x_{-n-m}(g_{-n-m})^{-1})_{m\in\N_0}$
is
an $\alpha$-regressive trajectory for $g_{-n}x_{-n}(g_{-n})^{-1}$,
we conclude that $g_{-n}x_{-n}(g_{-n})^{-1}\in (B^\phi_r)_+=W^c(r)W^u_b(r)$
for each $n\in\N$. By \ref{comp-to-id}, we have
\begin{equation}\label{again-3-id}
g_{-n}x_{-n}(g_{-n})^{-1}\to e\;\,\mbox{as $\,n\to\infty$.}
\end{equation}
Thus Lemma~\ref{con-loc} shows that
$gxg^{-1}\in W^u_b(r)$.
As a consequence, the map $W^u_b(t)\to W^u_b(r)\sub\con^-_*(\alpha)$,
$x\mto gxg^{-1}$
is $\K$-analytic.
Next, for $g\in W^c(r)W^u_b(r)=(B^\phi_r)_+$ and $x\in W^c(r)$,
there exists an $\alpha$-regressive trajectory $(g_{-n})_{n\in\N_0}$
in $B^r_\phi$ with $g_0=g$.
Moreover, there is an $\alpha$-regressive
trajectory $(x_{-n})_{n\in\N_0}$
in $W^u_b(r)W^c(r)$ such that $x_0=x$ and $x_{-n}\to e$
as $n\to\infty$
(see Lemma~\ref{con-loc}).
Then
\[
g_{-n}x_{-n}(g_{-n})^{-1}\in B^\phi_r
\]
for all $n\in\N_0$.
Hence $gxg^{-1}\in (B^\phi_r)_+=W^c(r)W^u_b(r)$.
Moreover, (\ref{again-3-id}) holds
by \ref{comp-to-id}, whence $gxg^{-1}\in W^u_b(r)$
by Lemma~\ref{con-loc}.
Thus
\[
W^c(r)W^u_b(r)\times W^u_b(r)\to W^u_b(r)\sub\con_*^-(\alpha),\quad
(g,x)\mto gxg^{-1}
\]
is $\K$-analytic.
Finally, let
$x\in \con_*^-(\alpha)$ be arbitrary
and $(x_{-n})_{n\in\N_0}$
be an $\alpha$-regressive trajectory
such that $x_0=x$ and $x_{-n}\to e$ as $n\to\infty$.
There exists $N\in\N$ such that $x_{-n}\in B^\phi_r$
for all $n\geq N$.
There exists $t\in\,]0,r]$ such that
\[
gx_{-n}g^{-1}(x_{-n})^{-1}\in B^\phi_r\quad
\mbox{for all $n\in \{0,\ldots, N-1\}$
and $g\in B^\phi_t$.}
\]
Let $g\in W^c(t)W^u_b(t)=(B^\phi_t)_+$
and $(g_{-n})_{n\in\N_0}$
be an $\alpha$-regressive trajectory in $B^\phi_t$
such that $g_0=g$.
For all $n\geq N$ we have $g_{-n},x_{-n}\in B^\phi_r$
and thus
\[
g_{-n}x_{-n}(g_{-n})^{-1}(x_{-n})^{-1}\in B^\phi_r.
\]
Define $y_{-n}:=
g_{-n}x_{-n}(g_{-n})^{-1}(x_{-n})^{-1}$
for $n\in\N_0$.
For each $n\in\N_0$,
the sequence $(y_{-n-m})_{m\in\N_0}$
is an $\alpha$-regressive trajectory for
$y_n$ in $B^\phi_r$
and thus $y_n\in (B^\phi_r)_+$.
Notably, $(y_{-n})_{n\in\N_0}$
is an $\alpha$-regressive trajectory in $(B^\phi_r)_+=W^c(r)W^u_b(r)$.
Since $x_{-n}\to e$, using
(\ref{again-3-id}), we deduce that
\[
y_n=(g_{-n}x_{-n}(g_{-n})^{-1})(x_{-n})^{-1}=\alpha^n(gxg^{-1})\to e
\]
as $n\to \infty$. Hence $gxg^{-1}x^{-1}=y_0\in W^u_b(r)$,
by Lemma~\ref{con-loc}.
As a consequence, the map
\[
W^c(t)W^u_b(t)\to W^u_b(r)\sub\con_*^-(\alpha),\quad g\mto gxg^{-1}x^{-1}
\]
is $\K$-analytic and hence also the map
$W^c(t)W^u_b(t)\to\con_*^-(\alpha)$, $g\mto gxg^{-1}$;
all hypotheses of Lemma~\ref{act-ana}
are verified.
\end{proof}
{\bf Proof of Theorem~\ref{thmC}.}
(a) In the Lie group
\[
H:=\con_*(\alpha)\times \lev_*(\alpha)\times
(\con^-_*(\alpha)^{\op}),
\]
the subset $W^s_a(r)\times W^c(r)\times W^u_b(r)$
is an open identity neighbourhood
and the restriction of $\pi$ to this
set is a $\K$-analytic diffeomorphism
onto the open identity neighbourhood
$B^r_\phi=W^s_a(r)W^c(r)W^u_b(r)$
of~$G$, see~(\ref{localprod}).
Note that
\[
H\times G\to G,\quad ((x,y,z),g)\mto (x,y,z).g:=xygz
\]
is a $\K$-analytic left action of~$H$ on~$G$.
Moreover, $H.e=\Omega$ is open in~$G$
and $\pi\colon H\to \Omega$
is the orbit map~$\sigma^e$.
Since $\pi$ is \'{e}tale at $e$
by the preceding, $\pi$ is \'{e}tale
by Lemma~\ref{fact-2}\,(d).\\[2mm]
(b) By \cite[Lemma~13.1\,(d)]{BGT},
we have $\parb(\alpha)=\con(\alpha)\lev(\alpha)$.
Lemma~\ref{inclu}
entails that the conjugation action of $\lev_*(\alpha)$
on $\con_*(\alpha)$ is $\K$-analytic.
As the conjugation action is used to define the semi-direct
product $\con_*(\alpha)\rtimes \lev_*(\alpha)$,
the latter is a $\K$-analytic Lie group and the product map
\[
p\colon \con_*(\alpha)\rtimes \lev(\alpha)\to \con_*(\alpha)\lev_*(\alpha)=\parb_*(\alpha),\;\;
(x,y)\mto xy
\]
is a group homomorphism.
Being the pointwise product of the
projections onto $x$ and $y$,
the map $p$ is $\K$-analytic.
The restriction of~$p$
to a map
\[
W^s_a(r)\times W^c(r)\to W^s_a(r)W^c(r)
\]
is a $\K$-analytic diffeomorphism onto the open subset
$W^s_a(r)W^c(r)$ or $\parb_*(\alpha)$
(cf.\ (\ref{localprod})).
As a consequence, the group
homomorphism $p$ is \'{e}tale.\\[1mm]
(c) By \cite[Lemma~13.1\,(e)]{BGT},
we have $\parb^-(\alpha)=\con^-(\alpha)\lev(\alpha)$.
Lemma~\ref{inclu}
entails that the conjugation action of $\lev_*(\alpha)$
on $\con_*^-(\alpha)$ is $\K$-analytic.
As the conjugation action is used to define the semi-direct
product $\con^-_*(\alpha)\rtimes \lev_*(\alpha)$,
the latter is a $\K$-analytic Lie group and the product map
\[
p\colon \con^-_*(\alpha)\rtimes \lev(\alpha)\to
\con^-_*(\alpha)\lev_*(\alpha)=\parb^-_*(\alpha),\;\;
(x,y)\mto xy
\]
is a group homomorphism.
Being the pointwise product of the
projections onto $x$ and $y$,
the map $p$ is $\K$-analytic.
We know that the map
\[
q\colon W^c(r)\times W^u_b(r)\to W^c(r)W^u_b(r)=(B^\phi_r)_+,\quad (a,b)\mto ab
\]
is a $\K$-analytic diffeomorphism (cf.\ (\ref{localprod})).
Since $W^c(r)$, $W^u_b(r)$, and $(B^\phi_r)_+$
are subgroups,
we have
\[
(B^\phi_r)_+=((B^\phi_r)_+)^{-1}=W^u_b(r)^{-1}W^c(r)^{-1}=W^ub(r)W^c(r).
\]
The restriction of $p$ to the open set
$W^u_b(r)\times W^c(r)$ has open image
\[
W^u_b(r)W^c(r)=(B^\phi_r)_+
\]
and is given by
$p(x,y)=q(y^{-1},x^{-1})^{-1}$, whence it is
a $\K$-analytic diffeomorphism
onto its open image. As a consequence, the group
homomorphism $p$ is \'{e}tale. $\,\square$
\section{Proof of Theorem~\ref{thmD}}
(a) If $\alpha$ is \'{e}tale,
then $L(\alpha_s)=L(\alpha)|_{\cg_{<a}}$
is an automorphism of $L(\con_*(\alpha))=\cg_{<a}$,
whence $\alpha_s$ is injective on some
$e$-neighbourhood, by the inverse function theorem.
Hence $\alpha|_{W^s_a(t)}$ is injective for some $t\in\,]0,r]$.
Since $\alpha(W^s_a(t))\sub W^s_a(t)$,
we deduce that $(\alpha^n)|_{W^s_a(t)}=(\alpha|_{W^s_a(t)})^n$
is injective, whence $\ik(\alpha)\cap W^s_a(t)=\{e\}$.
Since $W^s_a(t)$ is an open $e$-neighbourhood in $\con_*(\alpha)$,
we deduce that the subgroup $\ik(\alpha)\sub \con_*(\alpha)$
is discrete. If $\car(\K)=0$ and $\alpha$ is not \'{e}tale,
then $\cg_0\supseteq \ker(L(\alpha))\not=\{0\}$,
whence $L(\alpha_s)=L(\alpha)|_{\cg_0}$
is not injective. Since $\car(\K)=0$,
we have $L(\ker(\alpha_s))=\ker L(\alpha_s)$
(compare 4) in \S2 of \cite[Part~II, Chapter~V]{Ser}
and Corollary~1 in \cite[Part~II, Chapter~III, \S10]{Ser}).
As a consequence, $\ker(\alpha_s)$ is
not discrete, whence also the subgroup $\ik(\alpha)$
of $\con_*(\alpha)$
which contains $\ker(\alpha_s)$
is not discrete.\\[2mm]
(b) We give $Q:=\con_*(\alpha)/\ik(\alpha)$
the unique Lie group structure turning the canonical quotient map
$\con_*(\alpha)\to Q$ into an \'{e}tale
$\K$-analytic map. Then $\wb{\alpha_s}\colon Q\to Q$,
$g\ik(\alpha)\to\alpha(g)\ik(\alpha)$
is a well-defined, $\K$-analytic endomorphism
which is contractive as so is $\alpha_s$.
Moreover, $\wb{\alpha_s}$ is injective and \'{e}tale,
whence $\wb{\alpha_s}(Q)$ is an open subgroup of~$Q$
and $\wb{\alpha_s}\colon Q\to\wb{\alpha_s}(Q)$
a $\K$-analytic isomorphism.
We define $H_n:=Q$ for $n\in\N$
and $\phi_{n,m}\colon H_m\to H_n$ via $\phi_{n,m}:=(\wb{\alpha_s})^{n-m}$
for all $n\geq m$ in~$\N$.
Then $((H_n)_{n\in\N},(\phi_{n,m})_{n\geq m})$
is a directed system of $\K$-analytic Lie groups
and $\K$-analytic group homomorphisms
which are \'{e}tale embeddings,
whence the direct limit group
\[
H:=\dl\,H_n
\]
can be given a $\K$-analytic manifold structure making it a $\K$-analytic
Lie group and
each bonding maps $\phi_n\colon H_n\to H$ an \'{e}tale embedding.
Then also $H=\dl\, H_{n+1}$;
if we let $\beta_n\colon H_{n+1}\to H_n$ be the map $\psi_n:=\id_Q$,
we obtain a group homomorphism
\[
\beta:=\dl\, \beta_n\colon \dl\, H_{n+1}\to \dl\, H_n
\]
determined by $\beta\circ \phi_{n+1}=\phi_n$.
As the images of the $\phi_{n+1}$ form
an open cover of~$H$ and $\beta\circ\phi_n$
is $\K$-analytic and \'{e}tale, also $\beta$ is $\K$-analytic
and \'{e}tale.
Moreover, $\beta$ is surjective and injective
and hence a $\K$-analytic automorphism of~$H$.
If we identify $Q$ with an open subgroup of~$H$
by means of $\phi_1$, then $\beta(\phi_1(x))=\beta(\phi_2(\phi_{12}(x)))=
\phi_1(\phi_{12}(x))=\phi_1(\wb{\alpha_s}(x))$
for $x\in Q$ shows that $\beta$ restricts
to $\wb{\alpha_s}$ on~$Q$.
Since $\beta$ is a contractive $\K$-analytic
automorphism, $H$ is nilpotent (see \cite{CON}).
Hence also~$Q$ is nilpotent.
Since $\ik(\alpha)$ is discrete,
there exists a compact open subgroup $U\sub \con_*(\alpha)$
such that $U\cap \ik(\alpha)=\{e\}$.
Then $q|_U$ is injective, whence $U$
is nilpotent.\\[2mm]
(c) Since $\ker(\alpha_s)\sub \ker((\alpha_s)^2)\sub\cdots$
and $\car(\K)=0$,
the union $\ik(\alpha)=\bigcup_{n\in\N}\ker((\alpha_s)^n)$
is closed in~$G$
(see \cite[Proposition~4.20]{AUT}).
By a Baire argument (or \cite[Proposition~1.19]{AUT}),
there exists $n\in\N$ such that $\ker((\alpha_s)^n)$
is open in $\ik(\alpha)$.
Since $\ker((\alpha_s)^n)$
is a Lie subgroup of $\con_*(\alpha_s)$
(as a special case of
Theorems 2 and 3 in \cite[Part~II, Chapter~IV, \S5]{Ser}),
we deduce that also $\ik(\alpha)$
is a Lie subgroup. Using Theorem~1
from loc.\,cit.,
we get a unique $\K$-analytic manifold structure
on $Q:=\con_*(\alpha)/\ik(\alpha)$
turning the canonical quotient map
$q\colon \con_*(\alpha)\to Q$ into a submersion;
by Remark~2) following the cited theorem,
the latter manifold structure
makes $Q$ a $\K$-analytic Lie group.
Then $\wb{\alpha_s}\colon Q\to Q$,
$g\ik(\alpha)\to\alpha(g)\ik(\alpha)$
is a well-defined endomorphism
which is $\K$-analytic as $q$ is a submersion
and $\wb{\alpha_s}\circ q=q\circ \alpha_s$
is $\K$-analytic.
Moreover, $\wb{\alpha_s}$
is contractive as so is $\alpha_s$.
In addition, $\wb{\alpha_s}$ is injective
and hence \'{e}tale, as $\car(\K)=0$
(so that we can use the naturality of
the exponential function
to see that $L(\wb{\alpha}_s)$
is injective and hence an automorphism).
Hence $\wb{\alpha_s}(Q)$ is an open subgroup of~$Q$
and $\wb{\alpha_s}\colon Q\to\wb{\alpha_s}(Q)$
a $\K$-analytic isomorphism.
We define $H_n:=Q$ for $n\in\N$
and $\phi_{n,m}\colon H_m\to H_n$ via $\phi_{n,m}:=(\wb{\alpha_s})^{n-m}$
for all $n\geq m$ in~$\N$.
As in~(b), we obtain a $\K$-analytic Lie group
structure on $H:=\dl\,H_n$
and a contractive $\K$-analytic
automorphism of~$H$
which extends $\wb{\alpha}_s$.
As $H$ admits a contractive $\K$-analytic automorphism,
it is nilpotent.\\[2mm]
(d) Recall that $W^u_b$ is an open submanifold of
$\con^-_*(\alpha)$ and $\alpha(W^u_b(r))\sub W^u_s$.
For each $t\in \,]0,r]$, the group homomorphism
$\alpha|_{W^u_b(t)}$ is an injective
immersion, whence
$\alpha_u(W^u_b(t))$ is an open subgroup of
$W^u_b$ (and thus of $\con^-_*(\alpha)$)
and $\alpha_u|_{W^u_b(t)}$
is a $\K$-analytic diffeomorphism onto
this open subgroup of $W^u_b$ and hence of $\con^-_*(\alpha)$.
As a consequence, $\alpha_u$ is \'{e}tale.
Since $L(\alpha_u)=L(\alpha)|_{\cg_{>b}}$,
the Ultrametric Inverse Function
Theorem (see \cite[Lemma~6.1\,(b)]{IMP})
provides $\theta\in \,]0,r]$
with $b\theta\leq R$
such that
\[
\alpha_u(W^u_b(t))\supseteq W^u_b(bt)\quad
\mbox{for all $t\in \,]0,\theta]$,}
\]
exploiting Remark~\ref{forifthm}.
Thus
\begin{equation}\label{forinve}
\alpha_u(W^u_b(t/b))\supseteq W^u_b(t)\quad\mbox{for all $t\leq b\theta$.}
\end{equation}
Let $S:=(\alpha_u|_{W^u_b(\theta)})^{-1}(W^u_b(b \theta))$.
Then $\alpha_u(S)=W^u_b(b\theta)$,
which is an open subgroup of $\con^-_*(\alpha)$,
and $\alpha_u|_S\colon S\to \alpha_u(S)$
is a $\K$-analytic isomorphism.
Moreover,
\[
(\alpha_u|_S)^{-1}\colon \alpha_u(S)\to S\sub \alpha_u(S)
\]
maps $W^u_b(t)$ into $W^u_b(t/b)$
for each $t\in\,]0,b\theta]$, by (\ref{forinve}),
whence
\[
((\alpha_u|_S)^{-1})^n (W^u_b(b\theta))\sub W^u_b(\theta/b^{n-1})
\]
for each $n\in\N$. As a consequence, $(\alpha_u|_S)^{-1}$
is a contractive endomorphism of $\alpha_u(S)$.
We now define $H_n:=\alpha_u(S)$
for each $n\in\N$.
Using the bonding maps $\phi_{n,m}:=((\alpha_u|_S)^{-1})^{n-m}\colon
H_m\to H_n$, we can form the direct limit group
$H:=\dl\,H_n$ and give it a $\K$-analytic manifold
structure making it a Lie group
and turning each limit map $\phi_n\colon H_n\to H$
into an injective, \'{e}tale, $\K$-analytic
group homomorphism.
As in the proofs of (b) and (c),
we obtain a $\K$-analytic automorphism
$\beta$ of~$H$ which extends $(\alpha_u|_S)^{-1}\colon
\alpha_u(S)\to S\sub \alpha_u(S)$
and is contractive. Notably, $H$ is nilpotent. $\,\square$
\section{Proof of Proposition~\ref{thmE}}
(a) Since $\alpha(W^c(r)=W^c(r)$ and $\alpha|_{W^c(r)}$
is injective (see \ref{nunum}), we see that $(\alpha^n)|_{W^c(r)}
=(\alpha|_{W^c(r)})^n$ is injective for each $n\in\N$,
entailing that $\ik(\alpha)\cap W^c(r)=\{e\}$.
The asssertion follows as $W^c(r)$
is an open $e$-neighbourhood in $\lev_*(\alpha)$.\\[2mm]
(b) As the product map $W^c\times W^u_b\to W^cW^u_b$, $(x,y)\mto xy$
is a bijection,
$\alpha(xy)=\alpha(x)\alpha(y)$
holds for $(x,y)\in W^c(r)\times W^u_b(r)$
and the restrictions of $\alpha$
to mappings
$W^c(r)\to W^c(r)\sub W^c$
and $W^u_b(r)\to W^u_b$
are injective (see \ref{subu}),
we deduce that $\ker(\alpha)\cap W^c(r)W^u_b(r)=\{e\}$.
It remains to recall that $W^c(r)W^u_b(r)$
is an open $e$-neighbourhood in $\parb^-_*(\alpha)$.\\[2mm]
(c) We know from Theorem~\ref{thmD}\,(d) that $\alpha_u$
is \'{e}tale. Hence $\ker(\alpha_u)=\ker(\alpha)\cap\con^-_*(\alpha)$
is discrete in $\con^-_*(\alpha)$. $\,\square$.
\appendix
\section{Proofs for basic facts in Section~\ref{secprel}}\label{appA}
{\bf Proof for \ref{fact-2}.}
(a) Let $y\in (G.x)^o$. For $z\in G.x$,
there exists $g\in G$ such that $gy=z$. Then $g.(G.x)^o$
is an open neighbourhood of~$z$ in~$X$, whence $z\in (G.x)^o$.\\[1mm]
(b) By hypothesis, $V.x$ is an $x$-neighbourhood in~$G.x$
for each $e$-neighbourhood $V\sub G$.
If $g\in G$, then each $g$-neighbourhood contains
$gV$ for some $V$ as before and $\sigma^x(gV)=g.(V.x)$
is a neighbourhood of $g.x$ in~$G.x$.\\[1mm]
(c) If $G_x\sub G$ is the stabilizer of $x$,
then the quotient topology turns the
canonical map $q\colon G\to G/G_x$ into an open map
(see, e.g., \cite[Lemma~6.2\,(a)]{Str}).
Since $G/G_x$ is compact,
the induced continuous bijection $\phi\colon G/G_x\to G.x$
is a homeomorphism. Hence $\sigma^x=\phi\circ q$ is open.\\[1mm]
(d) For each $g\in G$, the left translation
$\lambda_{g^{-1}}\colon G\to G$, $h\mto g^{-1}h$
and the map $\sigma_g:=\sigma(g,\cdot)\colon X\to X$
are $\K$-analytic diffeomorphisms.
Since $\sigma$ is a left action, we have
$\sigma^x=\sigma_g\circ \sigma^x\circ\lambda_{g^{-1}}$.
As the tangent map $T_e(\sigma^x)\colon T_e(G)\to T_x(X)$
is an isomorphism by hypothesis,
also
$T_g(\sigma^x)=T_x(\sigma_g)\circ T_e(\sigma^x)\circ T_g(\lambda_{g^{-1}})$
is an isomorphism. Thus $\sigma^x$ is \'{e}tale at~$g$. $\,\square$\\[2.3mm]
{\bf Proof for \ref{act-ana}.}
For each $g\in G$,
(a) entails that the automorphism $\sigma_g$ of~$H$
is $\K$-analytic. Since also $(\sigma_g)^{-1}=\sigma_{g^{-1}}$
is $\K$-analytic, $\sigma_g$ is a $\K$-analytic automorphism of~$H$.
Let us show that $\sigma$ is $\K$-analytic on an
open neighbourhood of a given point $(g,x)\in G\times H$.
As the left translation $\lambda_g\colon G\to G$,
$z\mto gz$
and the right translation $\rho_x\colon H\to H$, $y\mto yx$ 
are $\K$-analytic diffeomorphisms, it suffices to show
that $\sigma\circ (\lambda_g\times \rho_x)$
is $\K$-analytic on some open $(e,e)$-neighbourhood
in $G\times H$. Let $U$ and $V$ be as in
(d) and $P$ be as in~(c).
For all $(z,y)\in (U\cap P)\times V$, we then have
\[
\sigma(gz,yx)=\sigma_g(\sigma(z,y)\sigma(z,x))
=\sigma_g(\sigma|_{U\times V}(z,y)\sigma^x|_P(z)),
\]
which is a $\K$-analatic function of
$(z,y)\in (U\cap P)\times V$. $\,\square$\\[2.3mm]
{\bf Proof of~\ref{comp-to-id}.}
If $U\sub G$ is an $e$-neighbourhood,
there exists an $e$-neighbourhood $V\sub G$ such that
$g_nVg_n^{-1}\sub U$ for all $n\in\N$.
Now $x_n\in V$ eventually and thus $g_nx_ng_n^{-1}\in U$. $\,\square$\\[2.3mm]
{\bf Proof of Lemma~\ref{enough-uni}.}
Let $P\sub N$ be an open $p$-neighbourhood
such that $f(P)\sub N$.
Then $g:=f|_P\colon P\to N$
is a $\K$-analytic map such that $T_pg=T_pf|_{T_pN}=T_pf|_{(T_pM)_{\geq 1}}$,
which is a linear automorphism of $T_pN$
with
\[
\frac{1}{\|(T_pg)^{-1}\|_{\op}}\geq 1
\]
if we endow $T_pM$ with a norm adapted to $T_pf$
and $F:=T_pN\sub T_pM$ with the induced norm.
Let $\phi\colon U\to V$
be a $\K$-analytic diffeomorphism from
an open $p$-neighbourhood $U\sub N$
onto an open $0$-neighbourhood $V\sub T_pN$,
such that $\phi(p)=0$ and $d\phi|_{T_pN}=\id_{T_pN}$.
After shrinking $U$ and $V$,
we may assume that $U\sub P$
and $f(U)\sub W$.
There exists $r>0$ such that
$B^F_r(0)\sub V$ and $f(\phi^{-1}(B^F_r(0)))\sub U$.
Then
\[
g\colon B_r(0)\to V,\quad x\mto \phi(f(\phi^{-1}(x)))
\]
is a $\K$-analytic map such that $g(0)=0$
and $g'(0)=T_pf|_F$.
By the Ultrametric Inverse Function Theorem
(cf.\
\cite[Lemma~6.1\,(b)]{IMP}),
after shrinking $r$ we may assume that
$g(B_r^F(0))$ is open and
\[
g(B_r^F(0))=g'(0)(B_r^F(0))\supseteq B_r^F(0).
\]
As a consequence, $O:=\phi^{-}(B^F_r(0))$
is an open $p$-neighbourhood in~$N$
such that $f(O)$ is open in~$N$
and $O\sub f(O)\sub U\sub W$. $\,\square$
{\bf Helge  Gl\"{o}ckner}, Universit\"at Paderborn, Institut f\"{u}r Mathematik,\\
Warburger Str.\ 100, 33098 Paderborn, Germany;
{\tt  glockner\at{}math.upb.de}\vfill
\end{document}